\documentclass[draft]{amsart}
\usepackage{amssymb,graphicx}
\usepackage{multirow,comment,caption}

\setlength{\textheight}{220mm} \setlength{\textwidth}{155mm}
\setlength{\oddsidemargin}{1.25mm}
\setlength{\evensidemargin}{1.25mm} \setlength{\topmargin}{0mm}

\newcommand{\df}{\dfrac}
\newcommand{\tf}{\tfrac}

\renewcommand{\i}{\infty}

\newcommand{\beqs}{\begin{equation*}}
\newcommand{\eeqs}{\end{equation*}}
\numberwithin{equation}{section}
 \theoremstyle{plain}
\newtheorem{theorem}{Theorem}[section]
\newtheorem{example}[theorem]{Example}

\newtheorem{corollary}[theorem]{Corollary}
\newtheorem{definition}[theorem]{Definition}

\theoremstyle{remark}

\hyphenation{auto-maton}

\begin{document}

\makeatletter
\def\imod#1{\allowbreak\mkern10mu({\operator@font mod}\,\,#1)}
\makeatother

\author{Frank Patane}
   \address{Department of Mathematics, University of Florida, 358 Little Hall, Gainesville FL 32611, USA}
   \email{frankpatane@ufl.edu}

\title[\scalebox{.82}{An identity connecting theta series associated with binary quadratic forms of discriminant $\Delta$ and $\Delta p^2$}]{An identity connecting theta series associated with binary quadratic forms of discriminant $\Delta$ and $\Delta($\MakeLowercase{prime}$)^2$}
     
\begin{abstract} 
We state and prove an identity which connects theta series associated with binary quadratic forms of idoneal discriminants $\Delta$ and $\Delta p^2$, for $p$ a prime. Employing this identity, we extend the results of Toh \cite{toh} by writing the theta series of forms of discriminant $\Delta p^2$ as a linear combination of Lambert series. We then use these Lambert series decompositions to give explicit representation formulas for the forms of discriminant $\Delta p^2$. Lastly, we give a generalization of our main identity, which employs a map of Buell \cite{buell} to connect forms of discriminant $\Delta$ to $\Delta p^2$. Our generalized identity links theta series associated with a single form of discriminant $\Delta$ to a theta series associated with forms of discriminant $\Delta p^2$, where $\Delta$ and $\Delta p^2$ are no longer required to be idoneal.
\end{abstract}

\keywords{representations of integers, binary quadratic forms, Lambert series, genus characters}

 \subjclass[2010]{11E16, 11E25, 11F27, 11H55, 11R29}

\date{\today}
   
\maketitle

   \section{Introduction}
\label{intro}

We use the standard notation of $(a,b,c)$ to represent the class of binary quadratic forms which are equivalent to the binary quadratic form $ax^2 +bxy +cy^2$. Equivalence of two forms means the transformation matrix which connects them is in $SL(2,\mathbb{Z})$. The discriminant of $(a,b,c)$ is defined as $\Delta:= b^2-4ac$, and we only consider the case $\Delta <0$ and $a>0$. The set of all classes of primitive forms of discriminant $\Delta$ comprise what is known as the class group of discriminant $\Delta$, denoted CL$(\Delta)$. It is well known that CL$(\Delta)$ is a finite abelian group under Gaussian composition of forms \cite{buell}.\\

We represent the theta series associated to $(a,b,c)$ as
\[
(a,b,c,q):=\sum_{x,y}q^{ax^2+bxy+cy^2}=\sum_{n\geq0}(a,b,c;n)q^n,
\]
where we use $(a,b,c;n)$ to denote the total number of representations of $n$ by $(a,b,c)$. We define the projection operator $P_{m,r}$ to be 
\[
P_{m,r}\sum_{n \geq 0}a(n)q^n = \sum_{n\geq 0}a(mn+r)q^{mn+r},
\]
where we take $0\leq r<m$. If we ever use the operator $P_{m,r}$ and $r \geq m$, then one can replace $r$ with $k$ where $k \equiv r \imod{m}$ and $0 \leq k<m$. 
Informally, the operator $P_{m,r}$ applied to $(a,b,c,q)$ collects the terms of $(a,b,c,q)$ which have the exponent of $q$ congruent to $r\imod{m}$. We also define
\[
[q^k]\sum_{n\geq 0}a(n)q^n=a(k).
\]
We use the standard notations along with well known identities to establish
\begin{equation}
   \label{poch}
   (a;q)_\i:= \prod_{n=0}^{\infty}(1-aq^n),
   \end{equation}
   \begin{equation}
   \label{E}
   E(q):= (q;q)_\i,
   \end{equation}
   \begin{equation}
   \label{phi}
   \phi(q):= \sum_{n=-\infty}^{\infty}q^{n^2} = \df{E^5(q^2)}{E^2(q^4)E^2(q)},
   \end{equation}
   \begin{equation}
   \label{psi}
   \psi(q):= \sum_{n=-\infty}^{\infty}q^{2n^2-n} = \df{E^2(q^2)}{E(q)},
   \end{equation}
	and
	\begin{equation}
   \label{em}
   E(-q) = \df{E^3(q^2)}{E(q^4)E(q)}.
   \end{equation}
	
We now give a brief discussion of the genus theory of binary quadratic forms. This theory was first developed in section 5 of the famous Disquisitiones Arithmeticae of Gauss, published in 1801 \cite{gauss}. We also note that both \cite{buell} and \cite{cox} are excellent resources which discuss the genus theory as well as other topics which we refer to later.\\

Two binary quadratic forms of discriminant $\Delta$ are said to be in the same genus if they are equivalent over $\mathbb{Q}$ via a transformation matrix in $SL(2,\mathbb{Q})$ whose entries have denominators coprime to $2\Delta$. An equivalent definition for the genera of binary quadratic forms is given by introducing the concept of assigned characters. The assigned characters of a discriminant $\Delta$ are the functions $\left(\tf{r}{p}\right)$ for all odd primes $p\mid \Delta$, as well as possibly the functions $\left(\tf{-1}{r}\right)$, $\left(\tf{2}{r}\right)$, and $\left(\tf{-2}{r}\right)$. The exact details are given in Buell \cite{buell} as well as in Cox \cite{cox}.\\
The genera are of equal size and partition the class group. The number of genera is always a power of 2 and is easily computed given the prime factorization of the discriminant. The number of genera of discriminant $\Delta p^2$ is either equal to the number of genera of discriminant $\Delta$ or double the number of genera of discriminant $\Delta$. Here, and everywhere, $p$ is a prime. Letting $v(\Delta)$ be the number of genera of discriminant $\Delta$ we have
\begin{equation}
\label{numgenform}
\frac{v(\Delta p^2)}{v(\Delta)} = \left\{ \begin{array}{ll}
        1&  2<p, p\mid\Delta,\\
				2&  2<p, p\nmid\Delta,\\
				1&  p=2, p\nmid\Delta,\\
				1&  p=2, \Delta=-4,\\
				2&  p=2, -4\neq\Delta \equiv 4,8 \imod{16},\\
				1&  p=2, \Delta \equiv 12 \imod{16},\\
				1&  p=2, \Delta \equiv 0 \imod{32},\\
				2&  p=2, \Delta \equiv 16 \imod{32}.\\
     \end{array}
     \right.
\end{equation}
We use $h(\Delta):=|$CL$(\Delta)|$ to be the class number of $\Delta$. There is a simple relation between the class number of $\Delta$ and $\Delta p^2$ which is given by 
\begin{equation}
\label{hrel}
h(\Delta p^2)=\df{h(\Delta)\left(p-\left(\tf{\Delta}{p}\right)\right)}{w},
\end{equation}
where
\[
	w :=\left\{ \begin{array}{ll}
        3&  \Delta  =-3,\\
				   2&  \Delta =-4,\\
					   1&  \Delta  <-4.\\
     \end{array}
     \right.
	\]
When each genus contains exactly one class we say the discriminant is idoneal. It is not hard to see that \eqref{hrel} implies $h(\Delta)\mid h(\Delta p^2)$. Formula \eqref{numgenform} implies $v(\Delta)\mid v(\Delta p^2)$. Thus the number of forms in a genus of $\Delta$ divides the number of forms in a genus of $\Delta p^2$. Hence $\Delta p^2$ idoneal implies $\Delta$ must also be idoneal.

\begin{definition}
\label{corr}
Let $g$ be a genus of discriminant $\Delta$ and $G$ a genus of discriminant $\Delta p^2$. Also let $r_1, r_2$ be coprime to $\Delta p^2$ and represented by the genera $g,G$, respectively. We define the map $\Phi_p$, by $\Phi_p(G)= g$ if $\chi(r_1)=\chi(r_2)$ for all assigned characters $\chi$ of $\Delta$. If $\Phi_p(G)=g$ we say $G$ corresponds to $g$.
\end{definition}
Before we give an explicit example to illustrate Definition \ref{corr}, we consider the case $p$ coprime to $2\Delta$. By \eqref{numgenform} we know there are twice as many genera of discriminant $\Delta p^2$ as there are of discriminant $\Delta$. Thus $\Phi_p$ would be a two to one map in this case. If $\chi_1, \ldots \chi_k$ are the assigned characters of discriminant $\Delta$, then the assigned characters of discriminant $\Delta p^2$ are $\chi_1, \ldots \chi_k, \left(\tf{\bullet}{p}\right)$. Let $g$ be a genus of discriminant $\Delta$ and have assigned character vector $\left\langle \chi_1, \ldots \chi_k\right\rangle$. Then the two genera $G_1, G_2$ of discriminant $\Delta p^2$ which correspond to $g$ are the genera with character vectors $\left\langle \chi_1, \ldots \chi_k, +1\right\rangle$ and $\left\langle \chi_1, \ldots \chi_k,-1\right\rangle$.

We now give an example to illustrate the concept of corresponding genera.

\begin{example}\label{excor}\end{example}
\begin{center}
\begin{tabular}{ | l | l | l | l | }
  \hline     
  \multicolumn{2}{|c|}{CL$(-20)$}& $\left(\tf{r}{5}\right)$ & $\left(\tf{-1}{r}\right)$\\
  \hline                   
   $g_1$ & $(1,0,5)$ &$+1$ & $+1$ \\ \hline 
	$g_2$ & $(2,2,3)$ &$-1$ & $-1$ \\ \hline 
\end{tabular}
\begin{flushright}
			 .
			 \end{flushright}
   \end{center}

\begin{center}
\begin{tabular}{ | l | l | l | l | l | }
  \hline     
  \multicolumn{2}{|c|}{CL$(-20\cdot 9) $} & $\left(\tf{r}{5}\right)$ & $\left(\tf{-1}{r}\right)$ & $\left(\tf{r}{3}\right)$\\
  \hline                   
   $G_1$ & $(1,0,45)$  & $+1$ & $+1$&$+1$ \\ \hline 
	$G_2$ & $(5,0,9)$ & $+1$ & $+1$&$-1$ \\ \hline 
	$G_3$ & $(7,4,7)$  & $-1$ & $-1$ &$+1$\\ \hline 
	$G_4$ & $(2,2,23)$  & $-1$ & $-1$&$-1$ \\ \hline 
\end{tabular}
\begin{flushright}
			 .
			 \end{flushright}
   \end{center}

The above tables give the reduced classes of forms for discriminants $-20$ and $-20 \cdot 9$, along with the genus structure and assigned characters. Using Definition \ref{corr}, we see $G_1$ and $G_2$ correspond to $g_1$ since $\left(\tf{r}{5}\right) = \left(\tf{-1}{r}\right) =+1$ for these genera. Similarly $G_3$ and $G_4$ correspond to $g_2$. We now state a central result of the paper, the proof of which is given in Section \ref{main}.

\begin{theorem}
	\label{idoo}
	 Let $(a,b,c)$ be of discriminant $\Delta$ and alone in its genus, $g$. Let $(A,B,C)$ be of discriminant $\Delta p^2$ and alone in its genus $G$. If $\Phi_p(G)=g$ (see Definition \ref{corr}), then for $p$ odd we have
\begin{equation}
\label{idom}
 w(A,B,C,q)
=  w(a,b,c,q^{p^{2}}) +\sum_{i=1}^{p-1} \tf{\left(\tf{ri}{p}\right) +1}{2}P_{p,i}(a,b,c,q),
\end{equation}
 and for $p=2$,
\[
 w(A,B,C,q)
=  w(a,b,c,q^{4}) +P_{2^{t+1},r}(a,b,c,q),
\]
\noindent
where\\
\[
	w :=\left\{ \begin{array}{ll}
        3&  \Delta  =-3,\\
				   2&  \Delta =-4,\\
					   1&  \Delta  <-4,\\
     \end{array}
     \right.
	\]
$r$ is coprime to $\Delta p^2$ and is represented by $(A,B,C)$. When $\Delta \equiv 0 \imod{16}$ we define $t=2$, and for $\Delta \not\equiv 0 \imod{16}$ we define $t=0,1$ according to whether $\Delta$ is odd or even.
\end{theorem}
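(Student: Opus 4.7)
My plan is to prove \eqref{idom} by identifying Fourier coefficients on both sides. Set $R(n):=(A,B,C;n)$ and $\varrho(n):=(a,b,c;n)$. Unwinding the operators $P_{p,i}$ and the substitution $q\mapsto q^{p^2}$, the identity for odd $p$ is equivalent to the following three statements, indexed by the $p$-adic valuation $v_p(n)$:
\begin{enumerate}
\item[(i)] if $p\nmid n$, then $w\,R(n)=\tfrac12\bigl(\left(\tf{rn}{p}\right)+1\bigr)\varrho(n)$;
\item[(ii)] if $v_p(n)=1$, then $R(n)=0$;
\item[(iii)] if $p^2\mid n$, then $R(n)=\varrho(n/p^2)$.
\end{enumerate}

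Cases (ii) and (iii) I will handle together by reduction of $(A,B,C)$ modulo $p$. The divisibility $p^2\mid\Delta p^2$ makes the quadratic form $Q(x,y):=Ax^2+Bxy+Cy^2$ degenerate over $\mathbb{F}_p$, and primitivity of $(A,B,C)$ forces its radical to be one-dimensional. Applying an $SL(2,\mathbb{Z})$-change of variables that carries a lift of the radical to the $x$-axis, one finds that $A\equiv B\equiv 0\pmod{p}$ and $C\not\equiv 0\pmod{p}$, and further $p^2\mid A$ by inspecting the discriminant. So $(A,B,C)$ is $SL(2,\mathbb{Z})$-equivalent to $(p^2A'',pB',C)$, and the identity $Q(x,py')=p^2(A''x^2+B'xy'+Cy'^2)$ implies that every representation of $n$ by $(A,B,C)$ with $p\mid n$ has $p\mid y$, forcing $p^2\mid n$ and giving (ii); and for $n=p^2m$, the map $(x,y)\mapsto(x,y/p)$ is a bijection onto representations of $m$ by the descended form $(A'',B',C)$ of discriminant $\Delta$. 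The hypothesis $\Phi_p(G)=g$ combined with idoneality identifies the class of this descended form with that of $(a,b,c)$, yielding (iii).

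Case (i) is where $\Phi_p(G)=g$ enters through the genus characters. Because both forms are alone in their genera, for $n$ coprime to $\Delta p$ the counts $\varrho(n)$ and $R(n)$ are nonzero precisely when the assigned character vector of $n$ matches that of $g$, respectively $G$. For $p$ odd coprime to $\Delta$, the characters of $G$ are those of $g$ augmented by the single Legendre symbol $\left(\tf{\bullet}{p}\right)$, whose value at any integer coprime to $\Delta p^2$ represented by $(A,B,C)$ equals $\left(\tf{r}{p}\right)$. Hence $R(n)\neq 0$ iff $\varrho(n)\neq 0$ and $\left(\tf{rn}{p}\right)=+1$, which is exactly the condition captured by $\tfrac12\bigl(\left(\tf{rn}{p}\right)+1\bigr)$. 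The equality of multiplicities (modulo the unit factor $w$) then follows because the invertible ideals of norm $n$ in $\mathcal{O}_{\Delta p^2}$ and $\mathcal{O}_\Delta$ are in natural bijection away from the conductor $p$, and $w$ accounts for units of $\mathcal{O}_\Delta$.

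For $p=2$ the strategy is parallel, but the added assigned character is one of $\left(\tf{-1}{\cdot}\right),\left(\tf{2}{\cdot}\right),\left(\tf{-2}{\cdot}\right)$, or trivial, according to $\Delta\bmod 16$ or $\bmod 32$ as in \eqref{numgenform}; the character condition collapses to a single congruence $n\equiv r\pmod{2^{t+1}}$, accounting for the projection $P_{2^{t+1},r}$ in place of the sum over $i$. The main obstacle I anticipate is the $p=2$ version of the reduction argument for (ii)--(iii): quadratic forms over $\mathbb{F}_2$ behave quite differently from those over $\mathbb{F}_p$ for odd $p$, in particular the bilinear form is alternating when $B$ is even and the radical can be two-dimensional, so the three sub-cases corresponding to $t\in\{0,1,2\}$ must be verified separately and the correct modulus of descent---$2$, $4$, or $8$---identified in each case.
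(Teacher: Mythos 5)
Your reduction of \eqref{idom} to the three coefficient statements (i)--(iii), and your descent argument for the cases $p\mid n$, are essentially sound for odd $p$: every primitive form of discriminant $\Delta p^2$ is equivalent to some $(p^2A'',pB',C)$ with $p\nmid C$, which gives (ii) and reduces (iii) to identifying the class of the descended form $(A'',B',C)$; note, though, that this identification is asserted rather than proved --- you need to observe that every value represented by $(A,B,C)$ is also represented by $(A'',B',C)$ (put $x\mapsto px$), so that $\Phi_p(G)=g$ together with $(a,b,c)$ being alone in $g$ forces the descended class to be that of $(a,b,c)$. The genuine gap is in (i). Your genus-character and ideal-counting argument only controls coefficients at $n$ coprime to $2\Delta p$: the claim that $\varrho(n)$ and $R(n)$ are nonzero ``precisely when'' the character vector of $n$ matches is only a necessary condition (one also needs $\Delta$ to be a square modulo $4n$), and, more seriously, the asserted equality of multiplicities $wR(n)=\tfrac12\bigl(\left(\tfrac{rn}{p}\right)+1\bigr)\varrho(n)$ rests on a bijection of invertible ideals ``away from the conductor,'' which says nothing about $n$ divisible by primes dividing $2\Delta$ or by the conductor of $\Delta$. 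Since $\Delta$ need not be fundamental in the theorem (the relevant list contains $-12,-16,-28,-32,\dots$), such $n$ are squarely in scope, their theta coefficients are not governed by genus characters or by invertible-ideal counts, and the unit bookkeeping behind $w$ is likewise only sketched. Without an argument for those coefficients --- for instance by extending your lattice descent to the congruence classes of $(x,y)$ modulo $p$ on which $Q$ takes the prescribed residue classes, or by passing to primitive representations and counting square roots of $\Delta$ modulo $4n$ at the bad primes --- identity \eqref{idom} is not established coefficient by coefficient. In addition, the $p=2$ half of the theorem is left as an anticipated obstacle rather than proved.

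For comparison, the paper does not argue generally at all: it invokes the classification of idoneal discriminants (CL$(\Delta)\cong\mathbb{Z}_1$ or $\mathbb{Z}_2^{r}$), observes that both $\Delta$ and $\Delta p^2$ idoneal forces $p\in\{2,3,5,7\}$ with an explicit finite table of pairs, and then verifies each resulting identity by an elementary splitting of the theta sum $\sum_{x,y}q^{ax^2+bxy+cy^2}$ over congruence classes of $(x,y)$ modulo $p$ (the worked cases $(\Delta,p)=(-3,3),(-4,5),(-3,7),(-28,2)$ are templates). That method treats every coefficient uniformly, including $n$ sharing factors with $2\Delta$, which is exactly where your conceptual route is incomplete; conversely, your approach, if the bad coefficients and the $p=2$ case were handled, would prove the identity without case-by-case verification and without relying on the (conditionally complete) list of idoneal discriminants.
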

We note that $t$ is the order of 4 in $\Delta$ when $64\nmid \Delta$. For equation \eqref{idom}, the coefficient of $P_{p,i}(a,b,c,q)$ is either $0$ or $1$. Moreover, if $p$ is coprime to $2\Delta$, then Theorem \ref{idoo} yields two identities for $(a,b,c)\in CL(\Delta)$, with one identity having exactly the terms $P_{p,k}(a,b,c,q)$, where $\left(\tf{k}{p}\right)=1$, and the other identity having exactly the terms $P_{p,j}(a,b,c,q)$, where $\left(\tf{j}{p}\right)=-1$. A generalized version of Theorem \ref{idoo} is Theorem \ref{newthm} given in Section \ref{main2}.\\

Continuing Example \ref{excor}, we apply Theorem \ref{idoo} to $\Delta=-20$ and $p=3$ to find
\begin{align}
(1,0,45,q)&=(1,0,5,q^{9})+P_{3,1}(1,0,5,q),\label{aa1}\\
(5,0,9,q)&=(1,0,5,q^{9})+P_{3,2}(1,0,5,q),\label{aa2}\\
(7,4,7,q)&=(2,2,3,q^{9})+P_{3,1}(2,2,3,q),\label{aa3}\\
(2,2,23,q)&=(2,2,3,q^{9})+P_{3,2}(2,2,3,q)\label{aa4},
\end{align}
and see that this example illustrates the discussion of the preceding paragraph.\\

 Let $f$ be a binary quadratic form of idoneal fundamental discriminant $\Delta$. In \cite{toh}, Toh gives identities which write the theta series associated to $f$ as a linear combination of Lambert series. Theorem \ref{idoo} connects theta series associated with the nonfundamental discriminant $\Delta p^2$ to theta series associated with discriminant $\Delta$. When $\Delta$ is fundamental, one can use Theorem \ref{idoo} in conjunction with the Lambert series decompositions given in \cite{toh} to write the Lambert series decompositions for the theta series associated with the genera of $\Delta p^2$.\\

Let
\[
L_{1}(\Delta,q) = \sum_{n>0}\left(\frac{\Delta}{n}\right)\frac{q^{n}}{1-q^{n}},
\]
and
\[
L_{2}(a,b,q) = \sum_{n>0}\sum_{m=1}^{|b|-1}\left(\frac{a}{n}\right)\left(\frac{b}{m}\right)\frac{q^{nm}}{1-q^{|b|n}}.
\]
If $\Delta$ is a fundamental idoneal discriminant with class number 2, then $-\Delta=tp$ where $p$ is an odd prime, and $t$ is either an odd prime or a power of 2. If $(a,b,c)$ has discriminant $\Delta$, and $(a,b,c)$ represents $m$ with gcd$(m,2\Delta)$=1, we have
\begin{equation}
\label{cons}
(a,b,c,q) = 1 +L_{1}(\Delta,q)
 +\left(\frac{m}{p}\right) L_{2}\left(\left(\frac{-1}{p}\right)p,-\left(\frac{-1}{p}\right)t,q\right).
\end{equation}
We note that Toh takes one page to write the Lambert series decompositions given by \eqref{cons} \cite[pg. 232]{toh}. Also there is a minor typo for the Lambert series decomposition of discriminant $-427$ given in \cite[pg. 232]{toh}, and so we prefer \eqref{cons}.\\

In Section \ref{berkyes} we employ identities of \cite{berk1} which write certain eta-quotients as Lambert series to derive identities of similar type given in \cite{luo}. In Section \ref{main} we give the proof of Theorem \ref{idoo} for known idoneal discriminants. In Section \ref{example} we demonstrate the utility of Theorem \ref{idoo} in deriving an explicit formula for the number of representations by an idoneal form. Section \ref{main2} contains a generalized version of Theorem \ref{idoo}. We give illustrative examples as well as deduce a corollary which is useful in obtaining a Lambert series decomposition for a genus of a discriminant $\Delta p^2$. We then give an example of this corollary as well as derive a new eta-quotient identity.

\section{Connecting identities of \cite{berk1} and \cite{luo}}
\label{berkyes}

In \cite{luo} Z.G. Liu derives the two eta quotient identities
\begin{equation}
\label{eta1}
1+\sum_{n=1}^\infty \left(\df{-15}{n}\right)\df{q^n}{1-q^{n}} = \df{E^2(q^3)E^{2}(q^{5})}{E(q)E(q^{15})},
\end{equation}
and
\begin{equation}
\label{eta2}
\sum_{n=1}^\infty \left(\df{5}{n}\right)\df{q^n -q^{2n}}{1-q^{3n}} = q\df{E^2(q)E^{2}(q^{15})}{E(q^3)E(q^5)}.
\end{equation}

Remark 3.1 of \cite{luo} notes the similarity between \eqref{eta1}, \eqref{eta2}, and the following eta quotients which are derived in \cite{berk1}
\begin{equation}
\label{beta1}
P(q):=1-\sum_{n=1}^\infty \left(\df{-15}{n}\right)\df{q^n}{1+q^{n}} = \df{E(q)E(q^6)E(q^{10})E(q^{15})}{E(q^2)E(q^{30})},
\end{equation}
and
\begin{equation}
\label{beta2}
Q(q):=\sum_{n=1}^\infty \left(\df{5}{n}\right)\df{q^n +q^{2n}}{1+q^{3n}} = q\df{E(q^2)E(q^3)E(q^{5})E(q^{30})}{E(q^6)E(q^{10})}.
\end{equation}
In this section we derive the connection between \eqref{eta1}, \eqref{beta1}, and \eqref{eta2}, \eqref{beta2}.\\

We derive \eqref{eta1} by letting $q \to -q$ in \eqref{beta1} and comparing the even part of both sides of the resulting equation. We have
\begin{equation}
\label{d1}
P_{2,0}P(-q)=P_{2,0}P(q)=1-P_{2,0}\sum_{n=1}^\infty \left(\df{-15}{n}\right)\df{q^n}{1+q^{n}}.
\end{equation}
Employing \eqref{psi}, \eqref{em}, and \eqref{beta1}, we have
\begin{equation}
\label{d2}
P(-q)=\psi(q)\psi(q^{15})\df{E(q^6)E(q^{10})}{E(q^4)E(q^{60})}.
\end{equation}
One can directly prove the identity
\begin{equation}
\label{d3}
P_{2,0}\psi(q)\psi(q^{15})=\psi(q^6)\psi(q^{10}),
\end{equation}
but instead we appeal to \cite[pg. 377]{berndt}. Combining \eqref{d2} and \eqref{d3} yields
\begin{equation}
\label{d4}
P_{2,0}P(-q)=\psi(q^6)\psi(q^{10})\df{E(q^6)E(q^{10})}{E(q^4)E(q^{60})}=\df{E^2(q^{12})E^2(q^{20})}{E(q^4)E(q^{60})}.
\end{equation}
Also we have
\begin{equation}
\label{m1}
\begin{aligned}
1-P_{2,0}\sum_{n=1}^\infty \left(\df{-15}{n}\right)\df{q^n}{1+q^{n}}&=1-P_{2,0}\sum_{n=1}^\infty \left(\df{-15}{n}\right)\df{q^n-q^{2n}}{1-q^{2n}}\\
&=1-\sum_{n=1}^\infty \left(\df{-15}{n}\right)\df{q^{2n}}{1-q^{4n}}+\sum_{n=1}^\infty \left(\df{-15}{n}\right)\df{q^{2n}}{1-q^{2n}}\\
&=1-\sum_{n=1}^\infty \left(\df{-15}{n}\right)\df{q^{2n}}{1-q^{4n}}+\sum_{n=1}^\infty \left(\df{-15}{n}\right)\df{q^{2n}+q^{4n}}{1-q^{4n}}\\
&=1+\sum_{n=1}^\infty \left(\df{-15}{n}\right)\df{q^{4n}}{1-q^{4n}}.
\end{aligned}
\end{equation}
Combining \eqref{d1}, \eqref{d4}, and \eqref{m1}, we have
\begin{equation}
\label{d5}
1+\sum_{n=1}^\infty \left(\df{-15}{n}\right)\df{q^{4n}}{1-q^{4n}}=\df{E^2(q^{12})E^2(q^{20})}{E(q^4)E(q^{60})}.
\end{equation}
Letting $q^4 \to q$ in \eqref{d5} yields \eqref{eta1}, and we have derived Liu's \cite[Prop. 3.3]{luo} from \cite[Thm 5.1]{berk1}.\\
In a similar way we derive \eqref{eta2} from \eqref{beta2}.
We begin by employing \eqref{psi}, \eqref{em}, and \eqref{beta2}, to find
\begin{equation}
\label{q11}
P_{2,0}Q(q)=P_{2,0}Q(-q)=-P_{2,0}~q\psi(q^3)\psi(q^5)\df{E(q^{2})E(q^{30})}{E(q^{12})E(q^{20})}.
\end{equation}

The equation
\begin{equation}
\label{qd3}
P_{2,0}~q\psi(q^3)\psi(q^{5})=q^4\psi(q^2)\psi(q^{30}),
\end{equation}
can be shown directly, and it also appears in \cite[pg. 377]{berndt}. Combining \eqref{psi}, \eqref{q11} and \eqref{qd3} yields
\begin{equation}
\label{q1}
P_{2,0}Q(q)=-q^4\df{E^2(q^{4})E^2(q^{60})}{E(q^{12})E(q^{20})}.
\end{equation}
Employing \eqref{beta2} and using techniques similar to those of \eqref{m1}, we find
\begin{equation}
\label{q2}
P_{2,0}Q(q)=P_{2,0}\sum_{n=1}^\infty \left(\df{5}{n}\right)\df{q^n +q^{2n}}{1+q^{3n}}=-\sum_{n=1}^\infty \left(\df{5}{n}\right)\df{q^{4n} -q^{8n}}{1-q^{12n}},
\end{equation}
Equating \eqref{q1} and \eqref{q2}, and letting $q^4 \to q$ yields \eqref{eta2} as desired.\\

 We note that the Lambert series of \eqref{eta1} and of \eqref{eta2} also appear in the Lambert series decomposition for the forms of discriminant $-15$. We derive these Lambert series decompositions by utilizing
\begin{equation}
\label{lsp}
\phi(q)\phi(q^{15})=P(-q)-Q(-q),
\end{equation}
and
\begin{equation}
\label{qsp}
\phi(q^3)\phi(q^{5})=P(-q)+Q(-q).
\end{equation}
Equations \eqref{lsp} and \eqref{qsp} are given in Theorem 5.1 of \cite{berk1}. It is easy to show
\begin{equation}
\label{p0}
P_{2,0}~\phi(q)\phi(q^{15})=(1,1,4,q^4),
\end{equation}
and
\begin{equation}
\label{p00}
P_{2,0}~\phi(q^3)\phi(q^5)=(2,1,2,q^4).
\end{equation}
Taking $P_{2,0}$ of both sides of \eqref{lsp}, and letting $q^4 \to q$, gives
\begin{equation}
\label{pr}
(1,1,4,q)=1+\sum_{n=1}^\infty \left(\df{-15}{n}\right)\df{q^{n}}{1-q^{n}} +\sum_{n=1}^\infty \left(\df{5}{n}\right)\df{q^n -q^{2n}}{1-q^{3n}},
\end{equation}
where we used \eqref{m1}, \eqref{q2}, and \eqref{p0}. Similarly we take $P_{2,0}$ of both sides of \eqref{qsp}, and letting $q^4 \to q$, yields
\begin{equation}
\label{pr2}
(2,1,2,q)=1+\sum_{n=1}^\infty \left(\df{-15}{n}\right)\df{q^{n}}{1-q^{n}} -\sum_{n=1}^\infty \left(\df{5}{n}\right)\df{q^n -q^{2n}}{1-q^{3n}},
\end{equation}
where we used \eqref{m1}, \eqref{q2}, and \eqref{p00}. Equations \eqref{pr} and \eqref{pr2} are consistent with the results of \cite{toh} as well as \eqref{cons}.

We have now shown the theta series associated with the forms $(1,1,4)$ and $(2,1,2)$ are not only a sum of Lambert series, but also a sum of eta-quotients. Moreover, adding and subtracting \eqref{pr} and \eqref{pr2}, and employing \eqref{eta1} and \eqref{eta2}, yields
\begin{equation}
\label{pr3}
\df{E^2(q^3)E^{2}(q^{5})}{E(q)E(q^{15})}=\df{(1,1,4,q)+(2,1,2,q)}{2}, \mbox{    \hspace{1cm}      }q\df{E^2(q)E^{2}(q^{15})}{E(q^3)E(q^5)}=\df{(1,1,4,q)-(2,1,2,q)}{2}.
\end{equation}

\section{Proof of Theorem \ref{idoo}}
\label{main}

In this section we prove Theorem \ref{idoo} for all known idoneal discriminants. Our method of proof relies on the fact that there are only finitely many idoneal discriminants. Indeed, $\Delta$ is idoneal if and only if CL$(\Delta) \cong \mathbb{Z}_1$ or CL$(\Delta) \cong \mathbb{Z}_2^{r}$, $1\leq r\leq 4$, and all such discriminants with at most two exceptions are given in \cite{voight}. It is now a finite process to see which primes $p$ and which discriminants $\Delta$, have the property that $\Delta$ and $\Delta p^2$ are both idoneal. One can employ \eqref{numgenform} and \eqref{hrel} to see that if $\Delta$ and $\Delta p^2$ are both idoneal, then $p=2,3,5,7$.\\

The below table lists all known discriminants $\Delta$ and primes $p$ such that both $\Delta$ and $\Delta p^2$ are idoneal.

\begin{center}
\begin{tabular}{ | l | l | } 
\hline               
   $p$ & $-\Delta$ \\ \hline 
	$2$ & $3, 4, 7, 8, 12, 15, 16, 24, 28, 40, 48, 60, 72, 88, 112, 120,$ \\ \cline{2-2}
	& $168, 232, 240, 280, 312, 408, 520, 760, 840, 1320, 1848$\\ \hline
	$3$ & $3, 4, 8, 11, 20, 32, 35$  \\ \hline 
	$5$ & $3, 4$ \\ \hline 
	$7$ & $3$ \\ \hline
\end{tabular}
\begin{flushright}
			 .
			 \end{flushright}
   \end{center}
All identities coming from Theorem \ref{idoo} have a very similar proof structure. Furthermore, the proofs of these identities are elementary in the sense that one does not need to employ modular form techniques. We illustrate the proof technique by giving the explicit proof for $(\Delta, p) = (-3,3), (-4,5),(-3,7),(-28,2)$.


We now prove Theorem \ref{idoo} for the case $\Delta=-3$ and $p=3$. Theorem \ref{idoo} gives
\begin{equation}
3(1,1,7,q)=3(1,1,1,q^{9})+P_{3,1}(1,1,1,q).
\end{equation}

\begin{proof}
We use $x\equiv y$ to mean $x\equiv y \imod{3}$.\\
 We see that 
\begin{align*}
P_{3,1}(1,1,1,q)&=\sum_{\substack{x+y\equiv 0\\  y\not\equiv 0}}q^{x^{2}+xy+y^{2}}+\sum_{\substack{x\equiv 0\\  y\not\equiv 0}}q^{x^{2}+xy+y^{2}}+\sum_{\substack{x\not\equiv 0\\  y\equiv 0}}q^{x^{2}+xy+y^{2}}\\
&=\sum_{x+y\equiv 0}q^{x^{2}+xy+y^{2}}+\sum_{\substack{x\equiv 0\\  y}}q^{x^{2}+xy+y^{2}}+\sum_{\substack{x\\  y\equiv 0}}q^{x^{2}+xy+y^{2}}-3\sum_{\substack{x\equiv 0\\  y\equiv 0}}q^{x^{2}+xy+y^{2}}\\
&=3\sum_{x,y}q^{x^{2}+3xy+9y^{2}}-3(1,1,1,q^{9})\\
&=3\sum_{x,y}q^{(x-y)^{2}+3(x-y)y+9y^{2}}-3(1,1,1,q^{9})\\
&=3(1,1,7,q)-3(1,1,1,q^{9}).
\end{align*}
\end{proof}

We prove Theorem \ref{idoo} for the case $\Delta=-4$ and $p=5$. Theorem \ref{idoo} gives
\begin{align}
2(1,0,25,q)&=2(1,0,1,q^{25})+(P_{5,1}+P_{5,4})(1,0,1,q),\label{iiv}\\
2(2,2,13,q)&=2(1,0,1,q^{25})+(P_{5,2}+P_{5,3})(1,0,1,q).\label{iiiv}
\end{align}

\begin{proof}
For this proof we use $x\equiv y$ to mean $x\equiv y \imod{5}$. We see that 
\begin{align*}
(P_{5,1}+P_{5,4})(1,0,1,q)&=\sum_{\substack{x\equiv 0\\  y\not\equiv 0}}q^{x^{2}+y^{2}}+\sum_{\substack{x\not\equiv 0\\  y\equiv 0}}q^{x^{2}+y^{2}}\\
&=2\sum_{\substack{x\equiv 0\\  y}}q^{x^{2}+y^{2}}-2\sum_{\substack{x\equiv 0\\  y\equiv 0}}q^{x^{2}+y^{2}}\\
&=2(1,0,25,q)-2(1,0,1,q^{25}),
\end{align*}
and have established \eqref{iiv}.\\
Similarly,
\begin{align*}
(P_{5,2}+P_{5,3})(1,0,1,q)&=\sum_{\substack{x\equiv y\\  x\not\equiv 0}}q^{x^{2}+y^{2}}+\sum_{\substack{x\equiv -y\\  x\not\equiv 0}}q^{x^{2}+y^{2}}\\
&=\sum_{x\equiv y}q^{x^{2}+y^{2}}+\sum_{x\equiv -y}q^{x^{2}+y^{2}}-2\sum_{\substack{x\equiv 0\\  y\equiv 0}}q^{x^{2}+y^{2}}\\
&=2\sum_{x\equiv y}q^{x^{2}+y^{2}}-2\sum_{\substack{x\equiv 0\\  y\equiv 0}}q^{x^{2}+y^{2}}\\
&2\sum_{x,y}q^{(x+3y)^{2}+(x-2y)^{2}}-2\sum_{\substack{x\equiv 0\\  y\equiv 0}}q^{x^{2}+y^{2}}\\
&=2(2,2,13,q)-2(1,0,1,q^{25}),
\end{align*}
which completes the proof.
\end{proof}

Theorem \ref{idoo} for the case $\Delta=-3$ and $p=7$ gives
\begin{align}
3(1,1,37,q)&=3(1,1,1,q^{49})+(P_{7,1}+P_{7,2}+P_{7,4})(1,1,1,q),\label{147p}\\
3(3,3,13,q)&=3(1,1,1,q^{49})+(P_{7,3}+P_{7,5}+P_{7,6})(1,1,1,q).\label{147pp}
\end{align}
\begin{proof}
As expected, we use $x\equiv y$ to mean $x\equiv y \imod{7}$.\\
 We see that 
\begin{align*}
(P_{7,1}+P_{7,2}+P_{7,4})(1,1,1,q)&=\sum_{\substack{x+y\equiv 0\\  y\not\equiv 0}}q^{x^{2}+xy+y^{2}}+\sum_{\substack{x\equiv 0\\  y\not\equiv 0}}q^{x^{2}+xy+y^{2}}+\sum_{\substack{x\not\equiv 0\\  y\equiv 0}}q^{x^{2}+xy+y^{2}}\\
&=\sum_{x+y\equiv 0}q^{x^{2}+xy+y^{2}}+\sum_{\substack{x\equiv 0\\  y}}q^{x^{2}+xy+y^{2}}+\sum_{\substack{y\equiv 0\\  x}}q^{x^{2}+xy+y^{2}}-3\sum_{\substack{x\equiv 0\\  y\equiv 0}}q^{x^{2}+xy+y^{2}}\\
&=3\sum_{x,y}q^{x^{2}+7xy+49y^{2}}-3(1,1,1,q^{49})\\
&=3\sum_{x,y}q^{(x-3y)^{2}+7(x-3y)y+49y^{2}}-3(1,1,1,q^{49})\\
&=3(1,1,37,q)-3(1,1,1,q^{49}).
\end{align*}
Similarly
\begin{align*}
(P_{7,3}+P_{7,5}+P_{7,6})(1,1,1,q)&=\sum_{\substack{x\equiv y\\  y\not\equiv 0}}q^{x^{2}+xy+y^{2}}+\sum_{\substack{x\equiv 3y\\  y\not\equiv 0}}q^{x^{2}+xy+y^{2}}+\sum_{\substack{x\equiv 5y\\  y\not\equiv 0}}q^{x^{2}+xy+y^{2}}\\
&=3\sum_{x,y}q^{3x^{2}+3xy+13y^{2}}-3(1,1,1,q^{49})\\
&=3(3,3,13,q)-3(1,1,1,q^{49}).
\end{align*}
\end{proof}

The last example we consider has $\Delta$ being a nonfundamental discriminant. This does not complicate matters, and the proof technique is still similar to the examples we have already seen. Applying Theorem \eqref{idoo} with $\Delta=-112$ and $p=2$ gives
\begin{align}
(1,0,112,q)&=(1,0,28,q^{4})+P_{8,1}(1,0,28,q),\\
(4,4,29,q)&=(1,0,28,q^{4})+P_{8,5}(1,0,28,q),\\
(7,0,16,q)&=(4,0,7,q^{4})+P_{8,7}(4,0,7,q),\\
(11,6,11,q)&=(4,0,7,q^{4})+P_{8,3}(4,0,7,q).
\end{align}

\begin{proof}
We use $x \equiv y$ to mean $x \equiv y \imod{2}$. We have
\begin{align*}
P_{8,1}(1,0,28,q)&=\sum_{\substack{y\equiv 0\\  x\not\equiv 0}}q^{x^{2}+28y^{2}}\\
&=\sum_{\substack{y\equiv 0\\  x}}q^{x^{2}+28y^{2}}-\sum_{\substack{y\equiv 0\\  x\equiv 0}}q^{x^{2}+28y^{2}}\\
&=(1,0,112,q)-(1,0,28,q^{4}),
\end{align*}
\begin{align*}
P_{8,5}(1,0,28,q)&=\sum_{\substack{x+y\equiv 0\\  y\not\equiv 0}}q^{x^{2}+28y^{2}}\\
&=\sum_{x+y\equiv 0}q^{x^{2}+28y^{2}}-\sum_{\substack{y\equiv 0\\  x\equiv 0}}q^{x^{2}+28y^{2}}\\
&=\sum_{x,y}q^{(2x+y)^{2}+28y^{2}}-(1,0,28,q^{4})\\
&=(4,4,29,q)-(1,0,28,q^{4}),
\end{align*}
\begin{align*}
P_{8,7}(4,0,7,q)&=\sum_{\substack{x\equiv 0\\  y\not\equiv 0}}q^{4x^{2}+7y^{2}}\\
&=\sum_{\substack{x\equiv 0\\  y}}q^{4x^{2}+7y^{2}}-\sum_{\substack{y\equiv 0\\  x\equiv 0}}q^{4x^{2}+7y^{2}}\\
&=(7,0,16,q)-(4,0,7,q^{4}),
\end{align*}
and
\begin{align*}
P_{8,3}(4,0,7,q)&=\sum_{\substack{x+y\equiv 0\\  y\not\equiv 0}}q^{4x^{2}+7y^{2}}\\
&=\sum_{x+y\equiv 0}q^{4x^{2}+7y^{2}}-\sum_{\substack{y\equiv 0\\  x\equiv 0}}q^{4x^{2}+7y^{2}}\\
&=\sum_{x,y}q^{4(x-y)^{2}+7(x+y)^{2}}-(4,0,7,q^{4})\\
&=(11,6,11,q)-(4,0,7,q^{4}).
\end{align*}

\end{proof}

\section{Explicit Representation Formulas}
\label{example}

In this section we give detailed examples in which we employ Theorem \ref{idoo} to derive explicit formulas for the number of representations by binary quadratic forms. It is easily seen that the method illustrated in this section applies to any idoneal nonfundamental discriminant.\\

We start with the simple example of discriminant $-36$, which has class group isomorphic to $\mathbb{Z}_2$. We use Theorem \ref{idoo} to derive the Lambert series decomposition for $(1,0,9,q)$ and $(2,2,5,q)$, and subsequently product representation formulas for the corresponding forms. We remark that \cite{toh} does not give Lambert series decompositions for these forms since $-36$ is a nonfundamental discriminant.\\
Theorem \ref{idoo} yields
\begin{equation}
\label{36o}
2(1,0,9,q)=2(1,0,1,q^9) + P_{3,1}(1,0,1,q),
\end{equation}
and
\begin{equation}
\label{36oo}
2(2,2,5,q)=2(1,0,1,q^9) + P_{3,2}(1,0,1,q).
\end{equation}
Employing the well known
\begin{equation}
\label{36p}
(1,0,1,q)=1+4\sum_{n=1}^{\infty}\left(\frac{-4}{n}\right) \frac{q^{n}}{1-q^{n}},
\end{equation}
it is not hard to show
\begin{equation}
\label{36pp}
P_{3,1}(1,0,1,q)-P_{3,2}(1,0,1,q)=4\sum_{n=1}^{\infty}\left(\frac{12}{n}\right) \frac{q^{n}(1-q^{n})}{1-q^{3n}}.
\end{equation}

Employing \eqref{36o}--\eqref{36pp} we find the Lambert series decompositions
\begin{equation}
\label{36dec1}
(1,0,9,q) = 1+\sum_{n=1}^{\infty}\left(\frac{-4}{n}\right) \frac{q^{n}}{1-q^{n}} +3\sum_{n=1}^{\infty}\left(\frac{-4}{n}\right) \frac{q^{9n}}{1-q^{9n}}+\sum_{n=1}^{\infty}\left(\frac{12}{n}\right) \frac{q^{n}(1-q^{n})}{1-q^{3n}},
\end{equation}
and
\begin{equation}
\label{36dec1}
(2,2,5,q)= 1+\sum_{n=1}^{\infty}\left(\frac{-4}{n}\right) \frac{q^{n}}{1-q^{n}} +3\sum_{n=1}^{\infty}\left(\frac{-4}{n}\right) \frac{q^{9n}}{1-q^{9n}}-\sum_{n=1}^{\infty}\left(\frac{12}{n}\right) \frac{q^{n}(1-q^{n})}{1-q^{3n}}.
\end{equation}

We have
\[
A(n):=[q^n]\sum_{n=1}^{\infty}\left(\frac{-4}{n}\right) \frac{q^{n}}{1-q^{n}}=\sum_{d|n}\left(\frac{-4}{d}\right).
\]
Also we see
\begin{align*}
\sum_{n=1}^{\infty}\left(\frac{12}{n}\right) \frac{q^{n}(1-q^{n})}{1-q^{3n}} &= \sum_{n=1}^{\infty}\sum_{m=0}^{\infty}\left(\frac{12}{n}\right)(q^{n(3m+1)} - q^{n(3m+2)})\\
&=\sum_{n=1}^{\infty}\sum_{m=1}^{\infty}\left(\frac{12}{n}\right)(q^{n(3m-1)} - q^{n(3m-2)})\\
&=\sum_{n=1}^{\infty}\sum_{m=1}^{\infty}\left(\frac{12}{n}\right)\left(\frac{m}{3}\right)q^{nm}\\
&=\sum_{n=1}^{\infty}\left(\sum_{d|n}\left(\frac{12}{d}\right)\left(\frac{n/d}{3}\right) \right)q^{n},
\end{align*}
so that
\[
D(n):=[q^n]\sum_{n=1}^{\infty}\left(\frac{12}{n}\right) \frac{q^{n}(1-q^{n})}{1-q^{3n}}=\sum_{d|n}\left(\frac{12}{d}\right)\left(\frac{n/d}{3}\right).
\]

Since $A(n)$ and $D(n)$ are multiplicative, it suffices to find their values at prime powers. It is easy to check that for a prime $p$
\[
A(p^{\alpha}) = \left\{ \begin{array}{ll}
       1 &  p=2, \\
       1+\alpha &  p \equiv 1 \imod{4}, \\
       \frac{(-1)^{\alpha}+1}{2} &  p \equiv 3 \imod{4},
     \end{array}
     \right.
\]
and
\[
D(p^{\alpha}) = \left\{ \begin{array}{ll}
       0 &  p=3, \alpha \neq 0 \\
       (-1)^{\alpha} &  p=2,\\
       1+\alpha &  p \equiv 1 \imod{12}, \\
       (-1)^{\alpha}(1+\alpha) &  p \equiv 5 \imod{12}, \\
       \frac{(-1)^{\alpha}+1}{2} &  p \equiv 7,11 \imod{12}.
     \end{array}
     \right.
\]
\vspace{.3cm}

The product representation formulas for the forms of discriminant $-36$ are given in the following theorem.

\begin{theorem}
Let the prime factorization of $n$ be
\[
n=2^{a}3^{b}\prod_{i=1}^{r}p_{i}^{v_{i}}\prod_{j=1}^{s}q_{j}^{w_{j}},
\]
where $p_{i} \equiv 1 \imod{4}$ and $3<q_{j}\equiv 3\imod{4}$. We define
\[
\Lambda(n):=\prod_{i=1}^{r}(1+{v_{i}})\prod_{j=1}^{s}\tf{1+(-1)^{w_{j}}}{2}.
\]
We find the representation formula for $(1,0,9)$ to be
\begin{equation}
\label{h}
(1,0,9;n) = \left\{ \begin{array}{ll}
        (1+(-1)^{a+t})\Lambda(n)&  b=0,\\
				0&  b\equiv 1 \imod{2},\\
				4\Lambda(n)&  0<b\equiv 0 \imod{2},\\
     \end{array}
     \right.
\end{equation}
and the representation formula for $(2,2,5)$ to be
\begin{equation}
\label{h2}
(2,2,5;n) = \left\{ \begin{array}{ll}
        (1-(-1)^{a+t})\Lambda(n)&  b=0,\\
				0&  b\equiv 1 \imod{2},\\
				4\Lambda(n)&  0<b\equiv 0 \imod{2}.\\
     \end{array}
     \right.
\end{equation}
where $t$ is the number of prime factors of $n$, counting multiplicity, that are congruent to $5\imod{12}$.
\end{theorem}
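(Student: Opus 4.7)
The plan is to read the representation counts directly as coefficients of $q^n$ in the Lambert series decompositions established just above, namely for $n > 0$
\[
(1,0,9;n) = A(n) + 3\,[9\mid n]\,A(n/9) + D(n), \qquad (2,2,5;n) = A(n) + 3\,[9\mid n]\,A(n/9) - D(n),
\]
where $A$ and $D$ are the multiplicative arithmetic functions whose prime-power values are tabulated in the preceding paragraph.

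I would then factor $n = 3^b m$ with $\gcd(m,3) = 1$ and exploit multiplicativity. From the tables, $A(3^b) = 1$ if $b$ is even and $0$ if $b$ is odd, while $D(3^b) = 1$ if $b = 0$ and $0$ otherwise. This lets me split on $b$. If $b$ is odd then $A(3^b) = 0$, $D(3^b) = 0$, and the middle term also vanishes (either $[9\mid n] = 0$ for $b = 1$, or $A(3^{b-2}) = 0$ for $b \geq 3$), so both formulas give $0$. If $b \geq 2$ is even, then $[9\mid n] = 1$, $A(3^b) = A(3^{b-2}) = 1$, and $D(3^b) = 0$, producing $(1,0,9;n) = (2,2,5;n) = 4 A(m)$. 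Finally, for $b = 0$ the formulas collapse to $A(n) \pm D(n)$.

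It remains to identify $A(m)$ with $\Lambda(n)$ and, for the $b = 0$ case, $D(n)$ with $(-1)^{a+t}\Lambda(n)$. The former is immediate: $A(2^a) = 1$ together with the prime-power values of $A$ reassemble into $\Lambda(n)$ by multiplicativity, settling the $b \geq 2$ even case as $4\Lambda(n)$. For $D(n)$ at $b = 0$, each prime $p_i \equiv 1 \imod{12}$ contributes $1+v_i$, each $p_i \equiv 5 \imod{12}$ contributes $(-1)^{v_i}(1+v_i)$, each $q_j \equiv 7, 11 \imod{12}$ (the only residues $\equiv 3 \imod{4}$ available to primes exceeding $3$) contributes $(1+(-1)^{w_j})/2$, and $2^a$ contributes $(-1)^a$. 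Collecting absolute values reassembles $\Lambda(n)$, while the signs multiply to $(-1)^{a+t}$ with $t = \sum_{p_i \equiv 5 \imod{12}} v_i$, precisely the number of prime factors of $n$ counted with multiplicity that are $\equiv 5 \imod{12}$. Hence $A(n) \pm D(n) = (1 \pm (-1)^{a+t})\Lambda(n)$, completing the $b = 0$ case. The proof is essentially bookkeeping; the only subtle point is to isolate the sign contributions from $p = 2$ and from primes $\equiv 5 \imod{12}$ (which produce $(-1)^{a+t}$) from the absolute-value contributions (which reassemble into $\Lambda(n)$), and to distinguish $b = 1$ from $b \geq 3$ odd when justifying the vanishing of the $A(n/9)$ term.
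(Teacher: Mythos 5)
Your proposal is correct and follows the same route as the paper: extract coefficients from the Lambert series decompositions for $(1,0,9,q)$ and $(2,2,5,q)$, then use multiplicativity of $A$ and $D$ together with their tabulated prime-power values, splitting on the power of $3$. The case analysis and the sign bookkeeping identifying $D(n)=(-1)^{a+t}\Lambda(n)$ when $b=0$ match the paper's intended argument.
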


We remark that \eqref{h} and \eqref{h2} imply $(1,0,9;n)=(2,2,5;n)$ if $3\mid n$, and $(1,0,9;n)\cdot(2,2,5;n)=0$ if $3\nmid n$.


The next example gives the Lambert series decomposition as well as the representation formulas for the forms of discriminant $-75$.\\

For $\Delta=-3$ and $p=5$, Theorem \ref{idoo} gives
\begin{equation}
\label{71w}
3(1,1,19,q)=3(1,1,1,q^{25})+(P_{5,1}+P_{5,4})(1,1,1,q),
\end{equation}
and
\begin{equation}
\label{71ww}
3(3,3,7,q)=3(1,1,1,q^{25})+(P_{5,2}+P_{5,3})(1,1,1,q).
\end{equation}

Combining \eqref{71w} and \eqref{71ww} we have,
\begin{equation}
\label{oee}
\begin{aligned}
3(1,1,19,q)+3(3,3,7,q)&=6(1,1,1,q^{25})+(1,1,1,q)-P_{5,0}(1,1,1,q)\\
&=(1,1,1,q)+5(1,1,1,q^{25}),
\end{aligned}
\end{equation}
and
\begin{equation}
\label{oe}
3(1,1,19,q)-3(3,3,7,q)=(P_{5,1}+P_{5,4}-P_{5,2}-P_{5,3})(1,1,1,q).
\end{equation}

Because of the relations
\begin{equation}
\label{75t}
(1,1,1,q) = 1+6\sum_{n=1}^{\infty}\left(\frac{-3}{n}\right)\frac{q^{n}}{1-q^{n}},
\end{equation}
and
\begin{equation}
\label{75tt}
(P_{5,1}+P_{5,4}-P_{5,2}-P_{5,3})(1,1,1,q) = 6\sum_{n>0}\left(\frac{-15}{n}\right)\frac{q^{n}-q^{2n}-q^{3n}+q^{4n}}{1-q^{5n}},
\end{equation}
we are led to define 
\[
f(q)=\sum_{n>0}\left(\frac{-3}{n}\right)\frac{q^{n}}{1-q^{n}},
\]
and
\[
g(q) = \sum_{n>0}\left(\frac{-15}{n}\right)\frac{q^{n}-q^{2n}-q^{3n}+q^{4n}}{1-q^{5n}}.
\]

Combining \eqref{oee}--\eqref{75tt} we find
\begin{equation}
\label{k}
(1,1,19,q)+(3,3,7,q)=2+2f(q)+10f(q^{25}),
\end{equation}
and
\begin{equation}
\label{kk}
(1,1,19,q)-(3,3,7,q)=2g(q).
\end{equation}

Adding and subtracting \eqref{k} and \eqref{kk} gives the Lambert series decompositions
\begin{equation}
\label{kkk}
(1,1,19,q)=1+f(q)+5f(q^{25})+g(q),
\end{equation}
and
\begin{equation}
\label{kkkkk}
(3,3,7,q)=1+f(q)+5f(q^{25})-g(q).
\end{equation}

We note that
\[
[q^n]f(q)=\sum_{d|n}\left(\frac{-3}{d}\right),
\]
\[
[q^n]g(q)=\sum_{d|n}\left(\frac{-15}{d}\right)\left(\frac{5}{n/d}\right),
\]
are multiplicative and we find the values
\[
[q^{p^{\alpha}}]f(q) = \left\{ \begin{array}{ll}
       1 &  p=3, \\
       1+\alpha &  p \equiv 1 \imod{3}, \\
       \frac{(-1)^{\alpha}+1}{2} &  p \equiv 2 \imod{3},
     \end{array}
     \right.
\]
and
\[
[q^{p^{\alpha}}]g(q) = \left\{ \begin{array}{ll}
       0 &p=5,(\alpha > 0),\\
       (-1)^{\alpha} &  p=3, \\
       1+\alpha &  p \equiv 1,4 \imod{15}, \\
       (-1)^{\alpha}(1+\alpha) &  p \equiv 7,13 \imod{15}, \\
       \frac{(-1)^{\alpha}+1}{2} &  5\neq p \equiv 2 \imod{3}.
     \end{array}
     \right.
\]

The representation formulas for $(1,1,19)$ and $(3,3,7)$ are given in the following theorem.

\begin{theorem}
Let the prime factorization of $n$ be
\[
n=3^{a}5^{b}\prod_{i=1}^{r}p_{i}^{v_{i}}\prod_{j=1}^{s}q_{j}^{w_{j}},
\]
where $p_{i}\equiv 1 \imod{3}$ and $5\neq q_{j}\equiv 2 \imod{3}$. Let
\[
\Lambda(n):=\prod_{i=1}^{r}(1+{v_{i}})\prod_{j=1}^{s}\frac{1+(-1)^{w_{j}}}{2}.
\]
The representation formula for $(1,1,19)$ is
\begin{equation}
\label{h5h}
(1,1,19;n) = \left\{ \begin{array}{ll}
        (1+(-1)^{a+t})\Lambda(n)&  b=0,\\
				0&  b\equiv 1 \imod{2},\\
				6\Lambda(n)&  0<b\equiv 0 \imod{2},\\
     \end{array}
     \right.
\end{equation}
and the formula for $(3,3,7;n)$ is 
\begin{equation}
\label{h5hh}
(3,3,7;n) = \left\{ \begin{array}{ll}
        (1-(-1)^{a+t})\Lambda(n)&  b=0,\\
				0&  b\equiv 1 \imod{2},\\
				6\Lambda(n)&  0<b\equiv 0 \imod{2},\\
     \end{array}
     \right.
\end{equation}
where $t$ is the number of prime factors of $n$, counting multiplicity, that are congruent to $7,13 \imod{15}$.
\end{theorem}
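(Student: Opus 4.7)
The plan is to extract the coefficients from the Lambert series decompositions \eqref{kkk} and \eqref{kkkkk} by exploiting the multiplicativity of $[q^{n}]f(q)$ and $[q^{n}]g(q)$ already recorded just above. Writing $n=3^{a}5^{b}\prod_{i}p_{i}^{v_{i}}\prod_{j}q_{j}^{w_{j}}$ as in the statement, I would first compute $[q^{n}]f(q)$ as a product of prime power values: the factors from the $p_{i}$ and $q_{j}$ assemble into $\Lambda(n)$, the factor at $3^{a}$ is $1$, and the factor at $5^{b}$ is $(1+(-1)^{b})/2$. Thus $[q^{n}]f(q)=\Lambda(n)$ when $b$ is even and $[q^{n}]f(q)=0$ when $b$ is odd.

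Next I would handle the term $5f(q^{25})$. Since $[q^{n}]f(q^{25})=[q^{n/25}]f(q)$ when $25\mid n$ and vanishes otherwise, replacing $b$ by $b-2$ in the previous computation gives a contribution of $5\Lambda(n)$ when $b$ is even and $b\geq 2$, and $0$ in all other cases. Summing the two $f$-pieces therefore yields $\Lambda(n)$ when $b=0$, $0$ when $b$ is odd, and $6\Lambda(n)$ when $b$ is even and positive.

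Finally I would evaluate $[q^{n}]g(q)$. The prime power table shows $[q^{5^{b}}]g(q)=0$ for $b>0$, so this contribution vanishes unless $5\nmid n$. When $b=0$, multiplicativity gives
\begin{equation*}
[q^{n}]g(q)=(-1)^{a}\prod_{i=1}^{r}\varepsilon_{i}(1+v_{i})\prod_{j=1}^{s}\frac{1+(-1)^{w_{j}}}{2},
\end{equation*}
where $\varepsilon_{i}=+1$ if $p_{i}\equiv 1,4\imod{15}$ and $\varepsilon_{i}=(-1)^{v_{i}}$ if $p_{i}\equiv 7,13\imod{15}$. Collecting signs, $\prod_{i}\varepsilon_{i}=(-1)^{t}$, where $t$ counts with multiplicity the prime factors of $n$ congruent to $7$ or $13\imod{15}$. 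Hence $[q^{n}]g(q)=(-1)^{a+t}\Lambda(n)$ when $b=0$, and $0$ otherwise. Substituting the $f$ and $\pm g$ pieces into \eqref{kkk} and \eqref{kkkkk} produces \eqref{h5h} and \eqref{h5hh}.

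The argument is essentially mechanical once the prime power evaluations preceding the theorem are in hand. The only delicate point is the bookkeeping for the $5f(q^{25})$ term, whose interaction with the parity of $b$ produces the three-case shape of the answer; one also uses the observation that primes $\equiv 7,13\imod{15}$ are automatically $\equiv 1\imod{3}$, so they contribute to the $p_{i}$'s (and to $t$) but never to the $q_{j}$'s, which is what makes the definition of $t$ compatible with the factorization appearing in $\Lambda(n)$.
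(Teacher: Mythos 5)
Your proposal is correct and follows exactly the route the paper intends: it reads off $(1,1,19;n)$ and $(3,3,7;n)$ from the decompositions \eqref{kkk} and \eqref{kkkkk} using the multiplicativity and the tabulated prime-power values of $[q^{n}]f(q)$ and $[q^{n}]g(q)$, with the correct bookkeeping for the $5f(q^{25})$ term and the observation that primes $\equiv 7,13\imod{15}$ lie among the $p_{i}$. Nothing further is needed.
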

Note that \eqref{h5h} and \eqref{h5hh} imply $(1,1,19;n)=(3,3,7;n)$ if $5\mid n$, and $(1,1,19;n)\cdot(3,3,7;n)=0$ if $5\nmid n$.



The next example discusses $\Delta=-20$ and $p=3$. The genus structure for the relevant discriminants are given in Example \ref{excor}. We note this example has CL$(\Delta)>1$ and Theorem \ref{idoo} yields 4 identities. Employing Theorem \ref{idoo} we have

\begin{align}
(1,0,45,q)&=(1,0,5,q^{9})+P_{3,1}(1,0,5,q),\label{aa1}\\
(5,0,9,q)&=(1,0,5,q^{9})+P_{3,2}(1,0,5,q),\label{aa2}\\
(7,4,7,q)&=(2,2,3,q^{9})+P_{3,1}(2,2,3,q),\label{aa3}\\
(2,2,23,q)&=(2,2,3,q^{9})+P_{3,2}(2,2,3,q)\label{aa4}.
\end{align}
Let
\begin{align*}
P(q)&:=\sum_{n=1}^{\infty}\left(\frac{-20}{n}\right)\frac{q^{n}}{1-q^{n}},\\
Q(q)&:=\sum_{n=1}^{\infty}\left(\frac{n}{5}\right)\frac{q^{n}}{1+q^{2n}}.
\end{align*}

The Lambert series decompositions for $(1,0,5)$ and $(2,2,3)$ are given in \cite[(3.3), (3.10)]{berk1} and are
\begin{equation}
\label{dec1}
(1,0,5,q) = 1+P(q)+Q(q),
\end{equation}
and
\begin{equation}
\label{dec2}
(2,2,3,q) = 1+P(q)-Q(q).
\end{equation}

We note that \eqref{dec1} and \eqref{dec2} are in agreement with \eqref{cons}. Employing \eqref{dec1} and \eqref{dec2} we find
\begin{equation}
\label{dec3}
(P_{3,1}-P_{3,2})(1,0,5,q) = R(q)+S(q),
\end{equation}
and
\begin{equation}
\label{dec4}
(P_{3,1}-P_{3,2})(2,2,3,q) = R(q)-S(q),
\end{equation}
where
\begin{align*}
R(q)&:=\sum_{n=1}^{\infty}\left(\frac{60}{n}\right)\frac{q^{n}-q^{2n}}{1-q^{3n}},\\
S(q)&:=\sum_{n=1}^{\infty}\left(\frac{n}{15}\right)\frac{q^{n}-q^{5n}}{1+q^{6n}}.
\end{align*}

We now have all the Lambert series necessary to proceed in a similar manner to the previous examples of this section. Employing \eqref{aa1}--\eqref{dec4} we find
\begin{align}
2(1,0,45,q) &= 2+P(q)+Q(q)-2P(q^{3})+2Q(q^{3})+3P(q^{9})+3Q(q^{9})+R(q)+S(q), \label{ab1}\\
2(5,0,9,q) &= 2+P(q)+Q(q)-2P(q^{3})+2Q(q^{3})+3P(q^{9})+3Q(q^{9})-R(q)-S(q),\label{ab2}\\
2(7,4,7,q) &= 2+P(q)-Q(q)-2P(q^{3})-2Q(q^{3})+3P(q^{9})-3Q(q^{9})+R(q)-S(q),\label{ab3}\\
2(2,2,23,q) &= 2+P(q)-Q(q)-2P(q^{3})-2Q(q^{3})+3P(q^{9})-3Q(q^{9})-R(q)+S(q).\label{ab4}
\end{align}

Equations \eqref{ab1}--\eqref{ab4} give the Lambert series decomposition for the forms of discriminant $-180$. We find

\[
[q^{p^\alpha}]P(q)=\sum_{d|p^{\alpha}}\left(\frac{-20}{d}\right)= \left\{ \begin{array}{ll}
       1 &  p=2,5, \\
       1+\alpha &  p \equiv 1,3,7,9 \imod{20}, \\
       \frac{(-1)^{\alpha}+1}{2} &  p \equiv 11,13,17,19 \imod{20},
     \end{array}
     \right.
\]
\[
[q^{p^\alpha}]Q(q) =\sum_{d|p^{\alpha}}\left(\frac{-4}{d}\right)\left(\frac{p^{\alpha}/d}{5}\right)= \left\{ \begin{array}{ll}
(-1)^{\alpha} &  p=2, \\
       1 &  p=5, \\
       1+\alpha &  p \equiv 1,9 \imod{20},\\
       (-1)^{\alpha}(1+\alpha) &  p \equiv 3,7 \imod{20},\\
       \frac{(-1)^{\alpha}+1}{2} &  p \equiv 11,13,17,19 \imod{20},
     \end{array}
     \right.
\]

\[
[q^{p^\alpha}]R(q)=\sum_{d|p^{\alpha}}\left(\frac{60}{d}\right)\left(\frac{-3}{p^{\alpha}/d}\right) = \left\{ \begin{array}{ll}
       0 &p=3, (\alpha>0),\\
       (-1)^{\alpha} &  p=2,5,\\
       1+\alpha &  p \equiv 1,7,43,49 \imod{60}, \\
       (-1)^{\alpha}(1+\alpha) &  p \equiv 23,29,41,47 \imod{60}, \\
       \frac{(-1)^{\alpha}+1}{2} & p \equiv 11,13,17,19 \imod{20},
     \end{array}
     \right.
\]
	
\[
[q^{p^\alpha}]S(q)=\sum_{d|p^{\alpha}}\left(\frac{d}{15}\right)\left(\frac{12}{p^{\alpha}/d}\right)  = \left\{ \begin{array}{ll}
       1 &p=2,\\
       0 &p=3, (\alpha>0),\\
       (-1)^{\alpha} &p=5,\\
       1+\alpha &  p \equiv 1,23,47,49 \imod{60}, \\
       (-1)^{\alpha}(1+\alpha) &  p \equiv 7,29,41,43 \imod{60}, \\
       \frac{(-1)^{\alpha}+1}{2} &  p \equiv 11,13,17,19 \imod{20}.
     \end{array}
     \right.
\]
We now deduce the representation formulas for the forms of discriminant $-180$.

\begin{theorem}
Let the prime factorization of $n$ be given by
\[
n=2^{a}3^{b}5^{c}\prod_{i=1}^{y}p_{i}^{v_{i}}\prod_{j=1}^{s}q_{j}^{w_{j}}
\]
where $3< p_{i}\equiv 1,3,7,9 \imod{20}$, $q_{j}\equiv 11,13,17,19 \imod{20}$. Let
\[
\Lambda(n):=\prod_{i=1}^{y}(1+{v_{i}})\prod_{j=1}^{s}\frac{1+(-1)^{w_{j}}}{2}.
\]
The representation formulas for the forms of discriminant $-180$ are
\begin{equation}
\label{h6h}
(1,0,45;n) = \left\{ \begin{array}{ll}
        \tf{1}{2}(1+(-1)^{a+t_{1}}+(-1)^{a+c+t_{2}}+(-1)^{c+t_{3}})\Lambda(n)&  b=0,\\
				(b-1)(1+(-1)^{a+t_{1}})\Lambda(n)&  b>0,\\
     \end{array}
     \right.
\end{equation}

\begin{equation}
\label{h6h2}
(5,0,9;n) = \left\{ \begin{array}{ll}
        \tf{1}{2}(1+(-1)^{a+t_{1}}-(-1)^{a+c+t_{2}}-(-1)^{c+t_{3}})\Lambda(n)&  b=0,\\
				(b-1)(1+(-1)^{a+t_{1}})\Lambda(n)&  b>0,\\
     \end{array}
     \right.
\end{equation}

\begin{equation}
\label{h6h22}
(7,4,7;n) = \left\{ \begin{array}{ll}
        \tf{1}{2}(1-(-1)^{a+t_{1}}+(-1)^{a+c+t_{2}}-(-1)^{c+t_{3}})\Lambda(n)&  b=0,\\
				(b-1)(1-(-1)^{a+t_{1}})\Lambda(n)&  b>0,\\
     \end{array}
     \right.
\end{equation}

\begin{equation}
\label{h6h222}
(2,2,23;n) = \left\{ \begin{array}{ll}
        \tf{1}{2}(1-(-1)^{a+t_{1}}-(-1)^{a+c+t_{2}}+(-1)^{c+t_{3}})\Lambda(n)&  b=0,\\
				(b-1)(1-(-1)^{a+t_{1}})\Lambda(n)&  b>0,\\
     \end{array}
     \right.
\end{equation}
where $t_{1}, t_2$, and $t_3$ are the number of prime factors of $n$, counting multiplicity, that are congruent to $3,7 \imod{20}$, $23,29,41,47\imod{60}$, or $7,29,41,43\imod{60}$ respectively.

\end{theorem}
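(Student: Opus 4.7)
The strategy is to read off the coefficient of $q^n$ from each of the Lambert series decompositions \eqref{ab1}--\eqref{ab4} and assemble the four representation counts. All the ingredients are at hand: $[q^n]P(q)$, $[q^n]Q(q)$, $[q^n]R(q)$, $[q^n]S(q)$ are multiplicative with prime-power values tabulated just above the theorem, so the computation reduces to a prime-by-prime product expansion on $n=2^a 3^b 5^c\prod p_i^{v_i}\prod q_j^{w_j}$.

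First I would evaluate $[q^n]P(q)$, $[q^n]Q(q)$, $[q^n]R(q)$, $[q^n]S(q)$ in closed form. In each case the primes $q_j$ contribute the common factor $\prod_j \tf{1+(-1)^{w_j}}{2}$ which, together with $\prod_i (1+v_i)$ from the $p_i$, builds a single $\Lambda(n)$; the remaining contributions come from $2,3,5$ and from the sign-flipping $p_i$'s. For $n$ coprime to $3$ this yields $[q^n]P=\Lambda(n)$, $[q^n]Q=(-1)^{a+t_1}\Lambda(n)$, $[q^n]R=(-1)^{a+c+t_2}\Lambda(n)$, $[q^n]S=(-1)^{c+t_3}\Lambda(n)$. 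A crucial observation is that the $R$ and $S$ tables take value $0$ at $p=3$, so $[q^n]R(q)=[q^n]S(q)=0$ whenever $b\geq 1$.

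Next I would handle the shifted series via $[q^n]F(q^k)=[q^{n/k}]F(q)$ for $k\mid n$ (and $0$ otherwise), reducing $P(q^3), Q(q^3), P(q^9), Q(q^9)$ to the same prime-power tables applied at $n/3$ or $n/9$, i.e.\ with $b$ replaced by $b-1$ or $b-2$. I would then split into the two cases of the theorem. For $b=0$ the shifted coefficients all vanish while $R,S$ survive, and \eqref{ab1}--\eqref{ab4} collapse immediately to the four-sign combinations in \eqref{h6h}--\eqref{h6h222}. For $b>0$ the opposite happens: $R,S$ vanish, only the six $P/Q$-type terms contribute, and the polynomial identity $(1+b)-2b+3(b-1)=2(b-1)$ collapses them to $2(b-1)(1\pm(-1)^{a+t_1})\Lambda(n)$, with the sign $\pm$ matching the sign of $Q$ in the corresponding equation.

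The main obstacle is the sign bookkeeping around the prime $3$. Since $3\equiv 3\imod{20}$ its multiplicity is counted inside $t_1$, so the factor $(-1)^b$ that $3^b$ contributes to $[q^n]Q(q)$ must be absorbed together with the signs from the other $p_i\equiv 3,7\imod{20}$ into a single $(-1)^{a+t_1}$. One must then verify that this same absorption is preserved through the shifts $b\mapsto b-1, b-2$ appearing in $Q(q^3)$ and $Q(q^9)$, so that a common $(-1)^{a+t_1}$ factors out cleanly before the polynomial identity in $b$ is applied. Once that sign consistency is in place the rest of the argument is mechanical, and division by $2$ yields the stated formulas \eqref{h6h}--\eqref{h6h222}.
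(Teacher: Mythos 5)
Your proposal is correct and follows essentially the same route the paper intends: extract $[q^n]$ from the decompositions \eqref{ab1}--\eqref{ab4} using the multiplicative prime-power tables for $P,Q,R,S$, with the shifted series handled by $b\mapsto b-1,b-2$ and the key sign bookkeeping that each shift $Q(q^3),Q(q^9)$ contributes an extra $-1$ through the 3-part, so a common $(-1)^{a+t_1}$ (with $t_1$ counting the factor $3^b$) factors out and the identity $(1+b)-2b+3(b-1)=2(b-1)$ yields the $b>0$ case, while the $b=0$ case is the immediate four-sign combination of $P,Q,R,S$.
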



We remark that the right hand side of \eqref{h6h}--\eqref{h6h222} is always nonnegative, because if the coefficient of $\Lambda(n)$ is negative, then we must have $\Lambda(n)=0$. The examples of this section illustrate how Theorem \ref{idoo} can be used to write the theta series of a form of nonfundamental discriminant as a linear combination of Lambert series. The utility of writing the theta series as a linear combination of Lambert series, is the ability to write a product formula for the total number of representations by the form.

\section{The general identity}
\label{main2}

In \cite[pg.119]{buell}, Buell gives a representation of all primitive forms of discriminant $\Delta p^2$ in terms of the primitive forms of discriminant $\Delta$. Explicitly, all primitive forms of discriminant $\Delta p^2$ are given by the primitive forms contained in
\begin{equation}
\label{buellh}
\begin{aligned}
\{(a,bp,cp^2)\} \cup \{(ap^2, pb+2ahp, ah^2 +bh+c): 0\leq h<p \},
\end{aligned}
\end{equation}
where $(a,b,c)$ is a primitive form of discriminant $\Delta$. Building on this observation, we define $\Psi_{G,p}(a,b,c)$ to be the set of primitive forms in \eqref{buellh} which are also in the genus $G$. We now introduce a new theorem.

\begin{theorem}
	\label{newthm}
	 Let $(a,b,c)$ be a primitive form of discriminant $\Delta$, and $G$ a genus of discriminant $\Delta p^2$ with $\Psi_{G,p}(a,b,c)$ nonempty. For $p$ an odd prime, we have
\[
 w\sum_{(A,B,C)\in \Psi_{G,p}(a,b,c)}(A,B,C,q)
=   w|\Psi_{G,p}(a,b,c)|(a,b,c,q^{p^{2}}) +\sum_{i=1}^{p-1} \frac{1}{2}( \left(\frac{ri}{p}\right) +1)P_{p,i} (a,b,c,q)
\]

 and for $p=2$,
\[
 w\sum_{(A,B,C)\in \Psi_{G,2}(a,b,c)}(A,B,C,q)
=  w|\Psi_{G,2}(a,b,c)|(a,b,c,q^{4}) + P_{2^{t+1},r} (a,b,c,q) 
\]
\noindent
where\\
\[
	w :=\left\{ \begin{array}{ll}
        3&  \Delta  =-3,\\
				   2&  \Delta =-4,\\
					   1&  \Delta  <-4,\\
     \end{array}
     \right.
	\]
	$r$ is coprime to $\Delta p^2$ and is represented by any form of $\Psi_{G,p}(a,b,c)$. When $\Delta \equiv 0 \imod{16}$ we define $t=2$, and for $\Delta \not\equiv 0 \imod{16}$ we define $t=0,1$ according to whether $\Delta$ is odd or even.
\end{theorem}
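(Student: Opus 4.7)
The plan is to compute the theta series of each Buell form in \eqref{buellh} via a $p$-adic decomposition of $(a,b,c,q)$, identify which Buell forms lie in the genus $G$ via their new $p$-character, and collect terms to match the projections on the right.

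First I would introduce the partial theta series $T_{x_0, y_0}(q) = \sum_{x \equiv x_0,\, y \equiv y_0 \imod{p}} q^{Q(x,y)}$ for $Q(x, y) = ax^2 + bxy + cy^2$ and $(x_0, y_0) \in (\mathbb{Z}/p\mathbb{Z})^2$. Note $(a,b,c,q) = \sum_{(x_0, y_0)} T_{x_0, y_0}(q)$ and $T_{0,0}(q) = (a,b,c,q^{p^2})$; for $(x_0, y_0) \ne (0,0)$, $T_{x_0, y_0}$ contributes only to $P_{p,\,Q(x_0, y_0) \bmod p}(a,b,c,q)$. A direct calculation gives $(a, bp, cp^2, q) = \sum_{x_0 \in \mathbb{Z}/p} T_{x_0, 0}(q)$, and the substitution $u = px + hy$ gives $(ap^2, p(b+2ah), ah^2+bh+c, q) = \sum_{y_0 \in \mathbb{Z}/p} T_{hy_0, y_0}(q)$. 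Thus the $p+1$ Buell forms correspond naturally to the $p+1$ projective directions $[v] \in \mathbb{P}^1(\mathbb{F}_p)$, whose lines through the origin partition $(\mathbb{Z}/p)^2 \setminus \{(0,0)\}$; the Buell form at direction $[v]$ is primitive iff $Q(v) \not\equiv 0 \imod{p}$.

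The character match is then transparent: the line through $[v]$ evaluates $Q$ to $t^2 Q(v)$ for $t \in (\mathbb{Z}/p)^*$, so the new $p$-assigned-character value of the Buell class at $[v]$ is $\left(\tf{Q(v)}{p}\right)$. Hence a primitive Buell form $F$ lies in $G$ iff $\left(\tf{Q(v_F)}{p}\right) = \left(\tf{r}{p}\right)$. Using the Jacobsthal evaluation $\sum_{h=0}^{p-1}\left(\tf{ah^2+bh+c}{p}\right) = -\left(\tf{a}{p}\right)$ when $p \nmid \Delta$, together with the degenerate count $\chi(Q) \equiv \chi(a)$ when $p \divides \Delta$, the set $B_G$ of primitive Buell forms from $(a,b,c)$ lying in $G$ has size $(p - \left(\tf{\Delta}{p}\right))/2$ or $p$ in the two regimes; an orbit count for the action of $\mathcal{O}_\Delta^*/\{\pm 1\}$ on $B_G$, which gives free orbits of size $w$, yields $|B_G| = w|\Psi_{G, p}(a,b,c)|$.

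Assembling, $w \sum_{F \in \Psi_{G, p}(a,b,c)}(F, q) = \sum_{F \in B_G}(F, q) = |B_G| \cdot T_{0,0}(q) + \sum_{(x_0, y_0) \in S_G} T_{x_0, y_0}(q)$, where $S_G$ is the union of lines (minus the origin) for directions in $B_G$. By the character match, $S_G$ consists of exactly those $(x_0, y_0) \ne (0,0)$ with $\left(\tf{r\,Q(x_0, y_0)}{p}\right) = 1$, so $\sum_{S_G} T_{x_0, y_0}(q) = \sum_{i=1}^{p-1} \tf{1}{2}\bigl(\left(\tf{ri}{p}\right) + 1\bigr) P_{p, i}(a,b,c,q)$, completing the identity for odd $p$. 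For $p = 2$ the same strategy applies, but the relevant projection is $P_{2^{t+1}, r}$, reflecting that the $2$-adic assigned characters $\left(\tf{-1}{\bullet}\right), \left(\tf{2}{\bullet}\right), \left(\tf{-2}{\bullet}\right)$ distinguish genera only mod $2^{t+1}$. The hard part will be establishing $|B_G| = w|\Psi_{G,p}(a,b,c)|$ uniformly: this rests on analyzing the $\Phi_p$-fiber of $CL(\Delta p^2) \to CL(\Delta)$ and the unit action on Buell's list, particularly in the $\Delta = -3, -4$ cases, and the $p = 2$ case demands additional 2-adic bookkeeping.
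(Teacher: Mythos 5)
The first thing to note is that the paper does not actually prove Theorem \ref{newthm}: it is stated in Section \ref{main2} with the explicit remark that the proof will appear in a subsequent paper, and the only proofs given are the case-by-case verifications of the special case, Theorem \ref{idoo}, in Section \ref{main}. Measured against those, your setup is the correct uniform version of what the paper does ad hoc: your partial theta series $T_{x_0,y_0}$ are precisely the sums over residue classes of $(x,y)$ modulo $p$ that the paper manipulates for each idoneal pair $(\Delta,p)$; your dictionary between the $p+1$ forms of \eqref{buellh} and the lines of $(\mathbb{Z}/p\mathbb{Z})^2$ (the form $(a,bp,cp^2)$ being the line $y\equiv 0$, the $h$-form the line $x\equiv hy$, primitive exactly when $Q(v)\not\equiv 0 \pmod p$) is correct; and the identification of the new assigned character of the Buell form at $[v]$ as $\left(\tfrac{Q(v)}{p}\right)$, hence of $S_G$ with the nonzero points where $\left(\tfrac{rQ(x_0,y_0)}{p}\right)=1$, matches the paper's data (e.g.\ \eqref{psihelp} and \eqref{nt1}--\eqref{nte}, and \eqref{71w} where $w=3$ forces your reading of $\Psi_{G,p}$ as a set of classes, each arising from exactly $w$ admissible directions).

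Two genuine gaps remain, and you have only flagged rather than closed them. First, the identity requires more than the cardinality statement $|B_G|=w|\Psi_{G,p}(a,b,c)|$: you need that \emph{every} class in $\Psi_{G,p}(a,b,c)$ comes from exactly $w$ of the primitive directions. Your automorph-orbit argument gives one direction of this (orbits of the order-$w$ group of proper automorphs modulo $\pm I$ have size $w$ on the non-isotropic directions, since fixed directions are eigendirections and hence isotropic, and directions in one orbit give equivalent forms), but it does not give the converse, namely that directions in \emph{different} orbits yield inequivalent forms --- equivalently, that an $SL(2,\mathbb{Z})$-equivalence between two sublattice forms extends to an automorph of $(a,b,c)$, or alternatively a fiber-counting argument via \eqref{hrel} together with disjointness of the Buell lists coming from inequivalent $(a,b,c)$. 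Without that, a class could in principle be hit $2w$ times and the linear combination on the left would be wrong; this is the actual content of the theorem and it is not supplied. Second, the $p=2$ case is not really addressed: there the right-hand side involves $P_{2^{t+1},r}$ with modulus $2$, $4$ or $8$ depending on $\Delta$, and the decomposition of $(x,y)$ into lines modulo $2$ does not by itself sort the exponents $Q(x,y)$ into residue classes modulo $2^{t+1}$; one must prove that the exponents coming from the relevant sublattice coset lie exactly in the class $r\pmod{2^{t+1}}$ and conversely (this is visible in the paper's explicit $\Delta=-112$ computations), and also justify the claim that the $2$-adic characters separate genera exactly at level $2^{t+1}$. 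Saying the same strategy applies with ``additional 2-adic bookkeeping'' leaves the substantive half of the stated theorem unproven. A minor point in the odd case: you should also record why the old assigned characters automatically agree (the Buell form at $[v]$ represents values, coprime to $\Delta p^2$, that are represented by $(a,b,c)$), so that membership in $G$ is governed solely by the character at $p$ when $p\nmid\Delta$ and is automatic when $p\mid\Delta$.
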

It is not hard to see that Theorem \ref{idoo} follows from Theorem \ref{newthm}. The proof of Theorem \ref{newthm} will be given in a subsequent paper. For now we content ourselves with an illustrative example of Theorem \ref{newthm}.

We consider the nonfundamental discriminant $\Delta=-92$ which has class group isomorphic to $\mathbb{Z}_3$. The forms are given in the table

\begin{center}
\begin{tabular}{ | l | l | l |}
  \hline     
  \multicolumn{2}{|c|}{CL$(-92) \cong \mathbb{Z}_3$}& $\left(\tf{r}{23}\right)$ \\
  \hline                   
   $g$ & $(1,0,23)$, $(3,\pm 2,8)$ &$+1$ \\ \hline 
\end{tabular}
\begin{flushright}
			 .
			 \end{flushright}
   \end{center}
	
We take $p=5$	and hence $\Delta p^2=-2300$. The forms and class group group structure for $\Delta p^2$ are given in the following table
	
\begin{center}
\begin{tabular}{ | l | l | l | l | }
  \hline     
  \multicolumn{2}{|c|}{CL$(-2300) \cong \mathbb{Z}_{18}$}& $\left(\tf{r}{5}\right)$ & $\left(\tf{r}{23}\right)$\\
  \hline                   
   $G_1$ & $(1,0,575)$, $(9,\pm 2,64)$, $(16,\pm 14,39)$, $(24,\pm 22,29)$, $(24,\pm 10,25)$&$+1$ & $+1$ \\ \hline 
	$G_2$ & $(23,0,25)$, $(3,\pm 2,192)$, $(8,\pm 6,73)$, $(13,\pm 12,47)$, $(25,\pm 20,27)$ &$-1$ & $+1$ \\ \hline 
\end{tabular}
\begin{flushright}
			 .
			 \end{flushright}
   \end{center}
	
	We compute
	\begin{equation}
	\label{psihelp}
	\begin{aligned}
	\Psi_{G_1,5}(1,0,23)&=\{(1,0,575), (24,\pm 10,25)\},\\
	\Psi_{G_2,5}(1,0,23)&=\{(23,0,25), (25,\pm 20,27)\},\\
	\Psi_{G_1,5}(3,2,8)&=\{(9,2,64), (16,-14,39), (24,-22,29)\},\\
	\Psi_{G_2,5}(3,2,8)&=\{(3,-2,192), (8,6,73), (13,12,47)\},
	\end{aligned}
	\end{equation}
	where $G_1$ and $G_2$ are the genera of discriminant $-2300$ given in the above table. Using \eqref{psihelp} we see that for $\Delta=-92$ and $p=5$, Theorem \ref{newthm} yields the identities
\begin{align}
(1,0,575,q) + 2(24,10,25,q) &= 3(1,0,23,q^{25}) + (P_{5,1}+P_{5,4})(1,0,23,q),\label{nt1}\\
(23,0,25,q) + 2(25,20,27,q) &= 3(1,0,23,q^{25}) + (P_{5,2}+P_{5,3})(1,0,23,q),\label{nt2}\\
(9,2,64,q) + (16,14,39,q) + (24,22,29,q) &= 3(3,2,8,q^{25}) + (P_{5,1}+P_{5,4})(3,2,8,q),\label{nt3}\\
(3,2,192,q) + (8,6,73,q)+ (13,12,47,q) &= 3(3,2,8,q^{25}) + (P_{5,2}+P_{5,3})(3,2,8,q).\label{nte}
\end{align}

Individually the identities \eqref{nt1}--\eqref{nte} do not directly yield Lambert series because each of these identities involve theta series not associated with the entire genus. However we can combine the respective identities in order to derive an identity for the theta series of the entire genus. If we add \eqref{nt1} to 2 copies of \eqref{nt3} we have
\begin{equation}
\label{co1}
 \sum_{(A,B,C)\in G_1}(A,B,C,q) = \sum_{(a,b,c)\in g}\left[3(a,b,c,q^{25}) + (P_{5,1}+P_{5,4})(a,b,c,q) \right].
\end{equation}
Similarly adding \eqref{nt2} to 2 copies of \eqref{nte} yields
\begin{equation}
\label{co2}
 \sum_{(A,B,C)\in G_2}(A,B,C,q) = \sum_{(a,b,c)\in g}\left[3(a,b,c,q^{25}) + (P_{5,2}+P_{5,3})(a,b,c,q) \right].
\end{equation}
The identities \eqref{co1} and \eqref{co2} lead us to the following corollary of Theorem \ref{newthm}.

\begin{corollary}
	\label{genflaw}
	 Let $g$ be a genus of discriminant $\Delta$ and $G$ a genus of discriminant $\Delta p^2$. If $G$ corresponds to $g$ (see Definition \ref{corr}), then for $p$ odd we have
\[
 w\sum_{(A,B,C)\in G}(A,B,C,q)
=  \sum_{(a,b,c)\in g}\left[ w\tf{|G|}{|g|}(a,b,c,q^{p^{2}}) +\sum_{i=1}^{p-1} \tf{\left(\tf{ri}{p}\right) +1}{2}P_{p,i} (a,b,c,q) \right]
\]

 and for $p=2$,
\[
 w\sum_{(A,B,C)\in G}(A,B,C,q)
=  \sum_{(a,b,c)\in g}\left[ w\tf{|G|}{|g|}(a,b,c,q^{4}) + P_{2^{t+1},r} (a,b,c,q) \right]
\]
\noindent
where\\
\[
	w :=\left\{ \begin{array}{ll}
        3&  \Delta  =-3,\\
				   2&  \Delta =-4,\\
					   1&  \Delta  <-4,\\
     \end{array}
     \right.
	\]
	$r$ is coprime to $\Delta p^2$ and is represented by $G$. When $\Delta \equiv 0 \imod{16}$ we define $t=2$, and for $\Delta \not\equiv 0 \imod{16}$ we define $t=0,1$ according to whether $\Delta$ is odd or even.
\end{corollary}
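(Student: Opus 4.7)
The plan is to derive Corollary \ref{genflaw} by summing the identity of Theorem \ref{newthm} over the primitive reduced forms of the genus $g$, with the corresponding genus $G$ of discriminant $\Delta p^{2}$ held fixed. For each $(a,b,c) \in g$ (and $p$ odd), Theorem \ref{newthm} supplies
\[
w\sum_{(A,B,C)\in \Psi_{G,p}(a,b,c)}(A,B,C,q) = w|\Psi_{G,p}(a,b,c)|(a,b,c,q^{p^{2}}) +\sum_{i=1}^{p-1} \tf{\left(\tf{ri}{p}\right) +1}{2}P_{p,i}(a,b,c,q),
\]
where $r$ (represented by any form of $\Psi_{G,p}(a,b,c)$) can be chosen uniformly in $(a,b,c)$ because the forms of $G$ all represent the same residue classes that determine the new assigned character modulo $p$. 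When this is summed over $(a,b,c) \in g$, the projection terms assemble into exactly the right-hand side of Corollary \ref{genflaw}, provided the remaining pieces collapse as expected.

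Two combinatorial facts about the map $\Psi$ are needed. First, the family $\{\Psi_{G,p}(a,b,c): (a,b,c) \in g\}$ should partition the forms of $G$. By Buell's description \eqref{buellh}, every primitive form of discriminant $\Delta p^{2}$ arises from some primitive form of discriminant $\Delta$; combining this with the character-theoretic Definition \ref{corr}, the forms produced by $(a,b,c) \in g$ lie precisely in those genera of discriminant $\Delta p^{2}$ which correspond to $g$, and no form of $G$ is produced by two distinct classes of $g$. This yields the key identity
\[
\sum_{(a,b,c)\in g}\sum_{(A,B,C)\in \Psi_{G,p}(a,b,c)}(A,B,C,q) = \sum_{(A,B,C)\in G}(A,B,C,q),
\]
which handles the left-hand side of Corollary \ref{genflaw}. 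Second, $|\Psi_{G,p}(a,b,c)|$ is independent of the choice of $(a,b,c) \in g$, with common value $|G|/|g|$. This uniformity reflects the regular action of CL$(\Delta p^{2})$ on each of its genera, an action which intertwines the Buell parametrization with class group multiplication. Granting it, the coefficient of $(a,b,c,q^{p^{2}})$ on the summed right-hand side becomes the uniform $w|G|/|g|$, matching the corollary's statement exactly. The $p=2$ case is structurally identical, with the single projection $P_{2^{t+1},r}(a,b,c,q)$ replacing the sum over $i$ and the same partition/constant-fiber properties applied.

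The main obstacle is rigorously establishing these partition and constant-fiber properties of $\Psi$. Both are statements about how Buell's construction \eqref{buellh} interacts with the equivalence classification of forms and with the assigned-character structure defining genera. The worked example with $\Delta = -92$ and $p = 5$ verifies them in one case (all fibers have size $3 = |G_{i}|/|g|$ and their union over $g$ recovers $G_{i}$), and a careful general argument would track the compatibility of Buell's map with the ring-class-field correspondence underlying CL$(\Delta p^{2}) \to $ CL$(\Delta)$. These details, like those of Theorem \ref{newthm} itself, naturally belong to the subsequent paper in which Theorem \ref{newthm} is proved, but once they are in hand the sum-over-$g$ derivation sketched above is routine.
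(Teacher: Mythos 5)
Your proposal is correct and follows essentially the same route as the paper: sum the identity of Theorem \ref{newthm} over the forms $f\in g$, using that the fibers $\Psi_{G,p}(f)$ are pairwise disjoint, of equal size, and cover $G$, so that $|\Psi_{G,p}(f)|=|G|/|g|$ and the left-hand sides assemble into $\sum_{(A,B,C)\in G}(A,B,C,q)$. The paper likewise states these partition and constant-fiber properties without proof and defers their verification (together with the proof of Theorem \ref{newthm}) to a subsequent paper, exactly as you do.
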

We note that if $\Delta$ is fundamental, then Corollary \ref{genflaw} directly leads to a Lambert series decomposition. If $\Delta$ is nonfundamental, then repeated applications of Corollary \ref{genflaw} will yield a Lambert series decomposition for any genus of discriminant $\Delta p^2$. Thus \eqref{co1} and \eqref{co2} do no immediately yield Lambert series, but after one application of Corollary \ref{genflaw} we would come to the Lambert series decompositions.\\
To show Corollary \ref{genflaw} follows from Theorem \ref{newthm} we first discuss some properties of the map $\Psi_{G,p}$. Indeed, one can show
\begin{equation}
\label{pro1}
|\Psi_{G,p}(f_1)| = |\Psi_{G,p}(f_2)|,
\end{equation}
and
\begin{equation}
\label{pro2}
\Psi_{G,p}(f_1) \cap \Psi_{G,p}(f_2) = \varnothing,
\end{equation}
where $f_1 \neq f_2$ are both contained in a genus $g$ of discriminant $\Delta$, and $G$ is a genus of discriminant $\Delta p^2$. Combining \eqref{pro1}, \eqref{pro2}, along with the fact that every form in $G$ is contained in $\Psi_{G,p}(f)$ for some $f\in g$, gives that $G$ is partitioned into equal size sets by the $\Psi_{G,p}(f)$ with $f\in g$. Hence $|G|=|\Psi_{G,p}(f)|\cdot |g|$. If we sum both sides of the identity in Theorem \ref{newthm}, over $f \in g$, then we arrive at Corollary \ref{genflaw}. The details will be provided in a in a subsequent paper, where we will also deliver the proof of Theorem \ref{newthm}.


Section \ref{example} gives multiple examples of deriving Lambert series decompositions for fundamental discriminants. We now give an example which illustrates how to use repeated applications of Corollary \ref{genflaw} to derive a Lambert series decomposition for the theta series associated with a genus of nonfundamental discriminant. Also this example shows that it is possible to have a Lambert series decomposition associated to a single form when the form is not alone in its genus.\\

We consider the nonfundamental discriminant $\Delta=-63$ and $p=2$ in Corollary \ref{genflaw}. The genus structure for discriminants $-63$ and $-63\cdot 4=-252$ are given below.
\begin{center}
\begin{tabular}{ | l | l | l | l | }
  \hline     
  \multicolumn{2}{|c|}{CL$(-63) \cong \mathbb{Z}_4$}& $\left(\tf{r}{3}\right)$ & $\left(\tf{r}{7}\right)$\\
  \hline                   
   $g_1$ & $(1,1,16)$, $(4,1,4)$ &$+1$ & $+1$ \\ \hline 
	$g_2$ & $(2,1,8)$, $(2,-1,8)$ &$-1$ & $+1$ \\ \hline 
\end{tabular}
\begin{flushright}
			 .
			 \end{flushright}
   \end{center}

\begin{center}
\begin{tabular}{ | l | l | l | l | }
  \hline     
  \multicolumn{2}{|c|}{CL$(-252) \cong \mathbb{Z}_4$}& $\left(\tf{r}{3}\right)$ & $\left(\tf{r}{7}\right)$\\
  \hline                   
   $G_1$ & $(1,0,63)$, $(7,0,9)$ &$+1$ & $+1$ \\ \hline 
	$G_2$ & $(8,6,9)$, $(8,-6,9)$ &$-1$ & $+1$ \\ \hline 
\end{tabular}
\begin{flushright}
			 .
			 \end{flushright}
   \end{center}

Corollary \ref{genflaw} yields
\begin{align}
(1,0,63,q)+(7,0,9,q)&=(1,1,16,q^{4})+(4,1,4,q^{4})+P_{2,1}[(1,1,16,q)+(4,1,4,q)],\label{gpf1}\\
(8,6,9,q)&=(2,1,8,q^{4})+P_{2,1}(2,1,8,q).\label{gpf2}
\end{align}

The goal of this example is to find the Lambert series decompositions for $(1,0,63,q)+(7,0,9,q)$ and $(8,6,9,q)$. We need to employ Corollary \ref{genflaw} once more to find the Lambert series decomposition of $(1,1,16,q)+(4,1,4,q)$ and $(2,1,8,q)$. Taking discriminant $\Delta=-7$ and $p=3$ in Corollary \ref{genflaw} gives
\begin{align}
(1,1,16,q)+(4,1,4,q)&=2(1,1,2,q^{9})+P_{3,1}(1,1,2,q),\label{nug}\\
2(2,1,8,q)&=2(1,1,2,q^{9})+P_{3,2}(1,1,2,q).\label{nugg}
\end{align}
Using the well known
\begin{equation}
\label{36pp2}
(1,1,2,q)=1+2\sum_{n=1}^{\infty}\left(\frac{-7}{n}\right) \frac{q^{n}}{1-q^{n}},
\end{equation}
it is not hard to show
\begin{equation}
\label{36ppp}
P_{3,1}(1,1,2,q)-P_{3,2}(1,1,2,q)=2\sum_{n=1}^{\infty}\left(\frac{21}{n}\right) \frac{q^{n}(1-q^{n})}{1-q^{3n}}.
\end{equation}
We define the Lambert series
\[
f(q):=\sum_{n=1}^{\infty}\left(\frac{-7}{n}\right) \frac{q^{n}}{1-q^{n}},
\]
and
\[
g(q):=\sum_{n=1}^{\infty}\left(\frac{21}{n}\right) \frac{q^{n}(1-q^{n})}{1-q^{3n}}.
\]
Adding and subtracting \eqref{nug} and \eqref{nugg} yields the Lambert series decompositions
\begin{align}
(1,1,16,q)+(4,1,4,q)&=2+f(q)+3f(q^9)+g(q),\label{knug}\\
2(2,1,8,q)&=2+f(q)+3f(q^9)-g(q).\label{knugg}
\end{align}
Since we now have the Lambert series decompositions for the genera of discriminant $-63$, we are ready to repeat the process to derive the Lambert series decompositions for the genera of discriminant $-63\cdot 4=-252$. It is easy to show
\begin{equation}
\label{252n}
P_{2,1}~f(q)=f(q)-2f(q^2)+f(q^4),
\end{equation}
and
\begin{equation}
\label{252nn}
P_{2,1}~g(q)=g(q)+2g(q^2)+g(q^4).
\end{equation}
We now have all the tools needed to find the Lambert series decompositions for $(1,0,63,q)+(7,0,9,q)$ and $(8,6,9,q)$.\\
Employing \eqref{gpf1}, \eqref{knug}, \eqref{252n}, and \eqref{252nn}, yields the Lambert series decomposition
\begin{align*}
(1,0,63,q)+(7,0,9,q)&=2+f(q)-2f(q^2)+2f(q^4)\\
&+3f(q^9)-6f(q^{18})+6f(q^{36})\\
&+g(q)+2g(q^2)+2g(q^4).
\end{align*}
Similarly, we combine \eqref{gpf2}, \eqref{knugg}, \eqref{252n}, and \eqref{252nn}, to find the Lambert series decomposition
\begin{align*}
2(8,6,9,q)&=2+f(q)-2f(q^2)+2f(q^4)\\
&+3f(q^9)-6f(q^{18})+6f(q^{36})\\
&-g(q)-2g(q^2)-2g(q^4).
\end{align*}

We note that one could easily derive product representation formulas for $(1,0,63;n)+(7,0,9;n)$ and $(8,6,9;n)$. In the preceding example, we derived the Lambert series decomposition for the genera of discriminant $-252=-7\cdot 2^2 \cdot 3^2$ by using Corollary \ref{genflaw} on the pairs $(\Delta,p)$ equals $(-63,2)$ and $(-7,3)$. We remark that we could have taken an alternate route and used Corollary \ref{genflaw} on the pairs $(\Delta,p)$ equals $(-28,3)$ and $(-7,2)$ to find a Lambert series decomposition for the genera of discriminant $-252$. \\

We now give a new eta-quotient identity for discriminant $-252$.
\begin{theorem}
\label{et}
Using the standard notation of this paper we have
\begin{equation}
\label{focus}
\frac{(1,0,63,q) - (7,0,9,q)}{2} = qE(-q^3)E(-q^{21}).
\end{equation}
\end{theorem}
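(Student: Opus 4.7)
The plan is to apply Theorem \ref{newthm} separately to each of the two forms $(1,1,16)$ and $(4,1,4)$ lying in the principal genus $g_{1}$ of CL$(-63)$, and thereby reduce \eqref{focus} to a base-level eta-quotient identity at discriminant $-63$.

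A direct application of Buell's presentation \eqref{buellh} gives $\Psi_{G_{1},2}(1,1,16)=\{(1,2,64)\} \sim \{(1,0,63)\}$ and $\Psi_{G_{1},2}(4,1,4)=\{(16,18,9)\} \sim \{(7,0,9)\}$. Applying Theorem \ref{newthm} with $\Delta=-63$, $p=2$, $w=1$, $t=0$ to each form separately yields
\[
(1,0,63,q) = (1,1,16,q^{4}) + P_{2,1}(1,1,16,q), \quad (7,0,9,q) = (4,1,4,q^{4}) + P_{2,1}(4,1,4,q),
\]
so on subtraction
\[
(1,0,63,q) - (7,0,9,q) = \bigl[(1,1,16,q^{4}) - (4,1,4,q^{4})\bigr] + P_{2,1}\bigl[(1,1,16,q) - (4,1,4,q)\bigr].
\]
Thus \eqref{focus} is equivalent to an eta-quotient identity for $(1,1,16,q) - (4,1,4,q)$ that, after the $q \mapsto q^{4}$ and $P_{2,1}$ dissections, collapses to $2qE(-q^{3})E(-q^{21})$.

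Since $[1,1,16]$ is the identity and $[4,1,4]$ is the unique order-$2$ element of CL$(-63) \cong \mathbb{Z}_{4}$, both classes lie in the principal genus; using $(2,1,8,q)=(2,-1,8,q)$ one checks that the difference $(1,1,16,q) - (4,1,4,q)$ equals a theta twist by an order-$4$ character on CL$(-63)$, and is therefore a weight-$1$ CM cusp form attached to a grossencharacter of $\mathbb{Q}(\sqrt{-7})$ of conductor $3$. One consequently expects an explicit eta-product representation for it at level $63$; this can be established either by Ramanujan--Berndt-style Jacobi-triple-product dissections analogous to \eqref{lsp} and \eqref{qsp}, or, as a safety net, by comparing Fourier coefficients up to the Sturm bound within the finite-dimensional space $S_{1}(\Gamma_{0}(63), (-7/\cdot))$.

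Substituting the base-level identity into the decomposition above and simplifying via \eqref{phi}--\eqref{em}, together with the dissection techniques of Section \ref{berkyes} (as used in the passage from \eqref{d2} to \eqref{d5} and from \eqref{q11} to \eqref{q2}), yields $2qE(-q^{3})E(-q^{21})$ and completes the proof. The main obstacle is the eta-product bookkeeping: pinning down the correct base-level eta product at level $63$ and carrying out the $q \mapsto q^{4}$ substitution and $P_{2,1}$ dissection so that the various factors collapse cleanly to $qE(-q^{3})E(-q^{21})$.
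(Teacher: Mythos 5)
Your opening reduction is the same as the paper's: the two dissection identities $(1,0,63,q)=(1,1,16,q^{4})+P_{2,1}(1,1,16,q)$ and $(7,0,9,q)=(4,1,4,q^{4})+P_{2,1}(4,1,4,q)$ are exactly \eqref{n1} and \eqref{n2}, although the paper proves them by a short lattice substitution rather than by invoking Theorem \ref{newthm}, whose proof is explicitly deferred to a subsequent paper (so, within this paper, the direct verification is the stronger route). The genuine gap comes after that reduction. You never state the base-level identity --- it is $\tfrac{1}{2}\left[(1,1,16,q)-(4,1,4,q)\right]=qE(q^{3})E(q^{21})$, quoted by the paper from \cite[Thm. 2.1]{berpat} as \eqref{6m} --- and, more importantly, you leave unexecuted the step you yourself call the main obstacle: showing that $(1,1,16,q^{4})-(4,1,4,q^{4})$ plus $P_{2,1}$ of that eta product collapses to $2qE(-q^{3})E(-q^{21})$. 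A direct $2$-dissection of $qE(q^{3})E(q^{21})$ is not routine, and ``simplifying via \eqref{phi}--\eqref{em}'' or a Sturm-bound comparison in $S_{1}(\Gamma_{0}(63),(-7/\cdot))$ is a proposal for a computation, not the computation itself; as written, the nontrivial content of Theorem \ref{et} is exactly what remains unproved.

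The paper sidesteps any dissection of the eta quotient with one extra pair of identities, $P_{2,0}(1,1,16,q)=2(2,1,8,q^{2})-(1,1,16,q^{4})$ and $P_{2,0}(4,1,4,q)=2(2,1,8,q^{2})-(4,1,4,q^{4})$ (\eqref{even5}, \eqref{even4}). Writing $F(q)=\tfrac{1}{2}\left[(1,1,16,q)-(4,1,4,q)\right]$ and $H(q)=\tfrac{1}{2}\left[(1,0,63,q)-(7,0,9,q)\right]$, these give $P_{2,0}H=-P_{2,0}F$, while the reduction you already have gives $P_{2,1}H=P_{2,1}F$ (this is \eqref{o}); hence $H(q)=-F(-q)$, and \eqref{6m} immediately yields $H(q)=qE(-q^{3})E(-q^{21})$. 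This sign-flip observation is the missing idea in your write-up: once \eqref{6m} is in hand, no $2$-dissection of the eta product is needed at all. To repair your argument you would need both to cite (or prove) \eqref{6m} and to supply either this even-part comparison or an honest evaluation of $P_{2,1}\,qE(q^{3})E(q^{21})$.
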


\begin{proof}
 We begin by employing Theorem 2.1 of \cite[pg.191]{berpat} with $s=1, m=3$ to find
\begin{equation}
\label{6m}
\frac{(1,1,16,q) - (4,1,4,q)}{2} = qE(q^3)E(q^{21}).
\end{equation}
Writing $x\equiv y$ to mean $x\equiv y \imod{2}$ we have
\begin{equation}
\label{n1}
\begin{aligned}
P_{2,1}(1,1,16,q)&=\sum_{\substack{x\not\equiv 0\\  y\equiv 0}}q^{x^{2}+xy+16y^{2}}\\
&=\sum_{\substack{x \\ y\equiv 0}}q^{x^{2}+xy+16y^{2}}-\sum_{\substack{x\equiv 0\\  y\equiv 0}}q^{x^{2}+xy+16y^{2}}\\
&=\sum_{x,y}q^{x^2+2xy+64y^2}-(1,1,16,q^{4})\\
&=\sum_{x,y}q^{(-x-y)^2+2(-x-y)y+64y^2}-(1,1,16,q^{4})\\
&=(1,0,63,q)-(1,1,16,q^{4}),
\end{aligned}
\end{equation}
and
\begin{equation}
\label{n2}
\begin{aligned}
P_{2,1}(4,1,4,q)&=\sum_{\substack{x\not\equiv 0\\  y\not\equiv 0}}q^{4x^{2}+xy+4y^{2}}\\
&=\sum_{x\equiv y}q^{4x^{2}+xy+4y^{2}}-\sum_{\substack{x\equiv 0\\  y\equiv 0}}q^{4x^{2}+xy+4y^{2}}\\
&=\sum_{x,y}q^{4(y-x)^{2}+(y-x)(y+x)+4(y+x)^{2}}-(4,1,4,q^{4})\\
&=(7,0,9,q)-(4,1,4,q^{4}).
\end{aligned}
\end{equation}
Subtracting \eqref{n1} and \eqref{n2} gives
\begin{equation}
\label{ll}
\frac{(1,0,63,q) - (7,0,9,q)}{2}=\frac{(1,1,16,q^4) - (4,1,4,q^4)}{2} + P_{2,1}\frac{(1,1,16,q) - (4,1,4,q)}{2}.
\end{equation}
It is clear from \eqref{ll} that
\begin{equation}
\label{o}
P_{2,1}\frac{(1,1,16,q) - (4,1,4,q)}{2} = P_{2,1}\frac{(1,0,63,q) - (7,0,9,q)}{2}.
\end{equation}
One can directly show
\begin{equation}
\label{even4}
P_{2,0}(4,1,4,q)= 2(2,1,8,q^2)-(4,1,4,q^4),
\end{equation}
and
\begin{equation}
\label{even5}
P_{2,0}(1,1,16,q)= 2(2,1,8,q^2)-(1,1,16,q^4).
\end{equation}
Combining \eqref{ll}, \eqref{even4}, and \eqref{even5}, yields
\begin{equation}
\label{e}
P_{2,0}\frac{(1,1,16,q) - (4,1,4,q)}{2} = -P_{2,0}\frac{(1,0,63,q) - (7,0,9,q)}{2}.
\end{equation}
Employing \eqref{6m}, \eqref{o}, and \eqref{e} gives \eqref{focus}. We note that \eqref{focus} is not covered in \cite[Thm. 2.1]{berpat} and is new.

\end{proof}

      \section{Acknowledgements}
   \label{ack}
   
	I am grateful to Duncan Buell for helpful comments and suggestions, and also to Alexander Berkovich for his advice and guidance.

\end{document}